\documentclass[a4paper,10pt]{amsart}
\textwidth16.1cm \textheight21.1cm \oddsidemargin-0.1cm
\evensidemargin-0.1cm
\usepackage{amsmath,amsfonts,amssymb,amsthm}
\usepackage{color,hyperref}

\theoremstyle{plain}
\newtheorem{theorem}{\bf Theorem}[section]

\newtheorem{lemma}[theorem]{\bf Lemma}

\newtheorem{thm}{Theorem}

\theoremstyle{definition}

\newtheorem{definition}[theorem]{\bf Definition}

\newcommand{\N}{\mathbb N}
\newcommand{\Z}{\mathbb Z}

\renewcommand{\t}{\, | \,}
\newcommand{\und}{\;\mbox{ and }\;}

\newcommand{\la}{\langle}
\newcommand{\ra}{\rangle}

     \DeclareMathOperator{\ord}{ord}
\DeclareMathOperator{\lcm}{lcm}     
 
\DeclareMathOperator{\supp}{supp}

\newcommand{\bdot}{\boldsymbol{\cdot}}

\newcommand{\blfloor}{\big\lfloor}
\newcommand{\brfloor}{\big\rfloor}

\numberwithin{equation}{section}

\begin{document}

\title[Erd{\H{o}}s-Ginzburg-Ziv constant]{On Erd{\H{o}}s-Ginzburg-Ziv inverse theorems for \\ Dihedral and Dicyclic groups}

\author{Jun Seok Oh and Qinghai Zhong}
\address{Institute for Mathematics and Scientific Computing \\ University of Graz, NAWI Graz \\ Heinrichstra{\ss}e 36 \\ 8010 Graz, Austria}
\email{junseok.oh@uni-graz.at, qinghai.zhong@uni-graz.at}
\urladdr{https://imsc.uni-graz.at/zhong/}

\subjclass[2010]{20D60, 11B75, 11P70}
\keywords{product-one sequences, Erd{\H{o}}s-Ginzburg-Ziv constant, Dihedral groups, Dicyclic groups}

\thanks{This work was supported by the Austrian Science Fund FWF, W1230 Doctoral Program ``Discrete Mathematics" and Project No. P28864--N35.}

\begin{abstract}
Let $G$ be a finite group and $\exp (G) = \lcm \{ \ord(g) \t g \in G \}$. A finite unordered sequence of terms from $G$, where repetition is allowed, is a product-one sequence if its terms can be ordered such that their product equals the identity element of $G$. We denote by $\mathsf s (G)$ (or $\mathsf E (G)$ respectively) the smallest integer $\ell$ such that every sequence of length at least $\ell$ has a product-one subsequence of length $\exp (G)$ (or $|G|$ respectively). In this paper, we provide the exact values of $\mathsf s (G)$ and $\mathsf E (G)$ for Dihedral and Dicyclic groups and we provide explicit characterizations of all sequences of length $\mathsf s (G) - 1$ (or $\mathsf E (G) - 1$ respectively) having no product-one subsequence of length $\exp (G)$ (or $|G|$ respectively).
\end{abstract}

\maketitle


\bigskip
\section{Introduction} \label{1}
\bigskip

Let $G$ be a finite group and $\exp (G) = \lcm \{ \ord(g) \t g \in G \}$. By a sequence $S$ over $G$, we mean a finite sequence of terms from $G$ which is unordered, repetition of terms allowed. We say that $S$ is a product-one sequence if its terms can be ordered so that their product equals the identity element of $G$. The {\it small Davenport constant $\mathsf d (G)$} is the maximal integer $\ell$ such that there is a sequence of length $\ell$ which has no non-trivial product-one subsequence. We denote by $\mathsf s (G)$ (or $\mathsf E (G)$ respectively) the smallest integer $\ell$ such that every sequence of length at least $\ell$ has a product-one subsequence of length $\exp (G)$ (or $|G|$ respectively). When $G$ is cyclic, we have that $\exp (G) = |G|$ and $\mathsf s (G) = \mathsf E (G) = \mathsf d (G) + |G|$, which is due to Erd{\H{o}}s, Ginzburg, and Ziv in 1961, whence $\mathsf s (G)$ is called the Erd{\H{o}}s-Ginzburg-Ziv constant of $G$.

Both invariants, $\mathsf s (G)$ and $\mathsf E (G)$, found wide attention for finite abelian groups (see \cite{Ga-Ge06,Ge-HK06,Ge09a,Gr13a} for surveys). Caro (\cite{Ca96a}) and Gao (\cite{Ga95c}) proved independently that $\mathsf E (G) = \mathsf d (G) + |G|$ (for a short proof see \cite[Proposition 5.7.9]{Ge-HK06}). This result has seen far reaching generalizations  (\cite[Chapter 16]{Gr13a}). Much less is known for $\mathsf s (G)$. If $G \cong C_{n_1} \oplus C_{n_2}$ with $1 \le n_1 \mid n_2$, then $\mathsf s (G) = 2n_1+2n_2-3$ (\cite[Theorem 5.8.3]{Ge-HK06}) but for groups of higher rank the precise value of $\mathsf s (G)$ is known only in very special cases   (\cite{Ed-El-Ge-Ku-Ra07,Fa-Ga-Zh11a, Gi-Sc19a,Gi-Sc19b}).

The study of sequences and associated invariants for non-abelian groups dates back to the 1970s (e.g., \cite{Ol-Wh77}), and fresh impetus came from applications in factorization theory and invariant theory (\cite{Cz-Do14a, Ge-Gr13a, Gr13b, Cz-Do13c, Cz-Do15a, Cz-Do-Sz17, Oh18a, Oh19a}). Among others, the formula $\mathsf E (G) = \mathsf d (G) + |G|$ turned out to hold true for various non-abelian groups (see \cite{Ga-Li10b, Ha15a, Ha-Zh19}). Gao and Lu (\cite{Ga-Lu08a}) verified such formula for the dihedral groups of order $2n$, where $n \ge 23$, and Bass (\cite{Ba07}) improved the result to all dihedral and dicyclic groups.

\smallskip
\begin{thm} \label{A}~
If $G$ is a dihedral group of order $2n$ for $n \ge 3$ $($or a dicyclic group of order $4n$ for $n \ge 2$ respectively$)$, then $\mathsf E (G) = \mathsf d (G) + |G| = 3n$ $($or $\mathsf E (G) = \mathsf d (G) + |G| = 6n$ respectively$)$.
\end{thm}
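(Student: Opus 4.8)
The plan is to prove each of the two equalities by combining the easy, uniform inequality $\mathsf E(G) \ge \mathsf d(G) + |G|$ with the harder, group-specific reverse inequality, together with a computation of $\mathsf d(G)$. The inequality $\mathsf E(G) \ge \mathsf d(G) + |G|$ holds for every finite group: if $U$ is a sequence over $G$ of length $\mathsf d(G)$ with no non-trivial product-one subsequence, then the sequence $U$ extended by $|G|-1$ copies of the identity $1_G$ has length $\mathsf d(G) + |G| - 1$ and no product-one subsequence of length $|G|$, since deleting the copies of $1_G$ from such a subsequence would leave a non-trivial product-one subsequence of $U$. Hence it suffices to compute $\mathsf d(G)$ and to establish $\mathsf E(G) \le \mathsf d(G) + |G|$.

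To pin down the Davenport constants, write $D_{2n} = \langle r, s \mid r^n = s^2 = (rs)^2 = 1 \rangle$ and $Q_{4n} = \langle a, b \mid a^{2n} = 1,\ b^2 = a^n,\ b^{-1}ab = a^{-1} \rangle$, with maximal cyclic subgroups $\langle r \rangle \cong C_n$ and $\langle a \rangle \cong C_{2n}$ respectively. The sequence consisting of one term $s$ together with $n-1$ terms equal to $r$ (respectively, one term $b$ together with $2n-1$ terms equal to $a$) has no non-trivial product-one subsequence: any sub-selection containing the term $s$ (respectively, $b$) represents, under every ordering, an element lying outside the maximal cyclic subgroup, while any sub-selection contained in it is a non-identity power of $r$ (respectively, $a$). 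This yields $\mathsf d(D_{2n}) \ge n$ and $\mathsf d(Q_{4n}) \ge 2n$; the matching upper bounds $\mathsf d(D_{2n}) \le n$ and $\mathsf d(Q_{4n}) \le 2n$ are known and follow from a short case analysis according to the number of terms lying outside the maximal cyclic subgroup. Consequently $\mathsf d(G) + |G|$ equals $3n$ for the dihedral groups and $6n$ for the dicyclic groups.

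The substance is the upper bound $\mathsf E(D_{2n}) \le 3n$ and $\mathsf E(Q_{4n}) \le 6n$, which I would treat for both families simultaneously. Let $C$ be the maximal cyclic subgroup, so that $|G| = 2|C|$, and decompose a sequence $S$ of length $|G| + \mathsf d(G)$ as $S = S_0 \cdot S_1$, with $S_0$ the subsequence of terms lying in $C$ and $S_1$ the subsequence of terms lying in the non-trivial coset. The starting point is the elementary description of which words over $G$ evaluate to the identity: the number of coset letters must be even, and a certain signed sum of the ``$C$-exponents'' of the letters must vanish in $C$, the admissible sign patterns being exactly those realizable by an ordering (among the even number of coset letters half contribute a plus sign and half a minus sign; the rotation letters then acquire free signs once at least one pair of coset letters is present; and in the dicyclic case each such pair contributes an extra summand $n$, coming from $b^2 = a^n$). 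If $|S_1| \le 1$, then $|S_0| \ge |S| - 1 = 3|C| - 1$, and any sequence of length $3|C| - 1$ over the cyclic group $C$ has a zero-sum subsequence of length $2|C|$ (extract two disjoint zero-sum subsequences of length $|C|$, which exist by the Erd{\H{o}}s-Ginzburg-Ziv theorem), providing the desired product-one subsequence of $S$. If $|S_1| \ge 2$, then a pair of coset terms is available; using the identities $(r^i s)(r^j s) = r^{i-j}$ and $(a^i b)(a^j b) = a^{i-j+n}$ and the sign freedom these unlock, one assembles a product-one subsequence of length $2|C|$ by matching a partial zero-sum drawn from $S_0$ against a balanced contribution from $S_1$, organizing the argument by the parity of $|S_1|$ and by the size of $|S_1|$ relative to $|C|$ and $2|C|$, and invoking the cyclic-group facts $\mathsf d(C) = |C| - 1$ and $\mathsf s(C) = 2|C| - 1$ together with Cauchy--Davenport-type lower bounds for the sets of realizable signed exponent sums.

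I expect the main obstacle to lie in the intermediate range of $|S_1|$: too large for the purely cyclic reduction above to apply, yet too small for $S_1$ alone to supply a product-one subsequence of length $2|C|$. In that regime one must control simultaneously (i) the parity of the number of coset terms actually used, (ii) in the dicyclic case the accumulated contribution of $b^2 = a^n$, whose effect on the target residue depends on that number modulo $4$, and (iii) the residue modulo $|C|$ of the total signed exponent sum, which forces a delicate interlocking of a zero-sum extracted from $S_0$ with the signed sum available from $S_1$. This intricate combinatorial analysis is precisely what was carried out by Gao and Lu \cite{Ga-Lu08a} for the dihedral groups with $n \ge 23$ and by Bass \cite{Ba07} in full generality; accordingly I would either reproduce Bass's argument in the notation set up above, or simply invoke \cite{Ba07}, and then turn to the finer inverse (structural) results that form the real object of this paper.
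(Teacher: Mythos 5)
Your proposal is correct and ends up exactly where the paper does: Theorem~\ref{A} is not proved in this paper but quoted from Bass \cite{Ba07}, the lower bound $\mathsf E(G) \ge \mathsf d(G) + |G|$ being the paper's Lemma~\ref{2.2} (proved the same way you do) and the values $\mathsf d(G)=n$, resp.\ $2n$, being stated as known. Your verifications of the product-one free sequences, of the Davenport constants, and of the case $|S_1|\le 1$ via two applications of Erd{\H{o}}s--Ginzburg--Ziv are all sound, and you correctly identify that the remaining intermediate range of $|S_1|$ is the substance of \cite{Ba07}, which you then invoke.
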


In some earlier papers (\cite{Cz-Do-Ge16,Ha-Zh19}), authors also studied an invariant $\mathsf s' (G)$ defined as the smallest integer $\ell$ such that every sequence over $G$ of length at least $\ell$ has a product-one subsequence of length $\max \{ \ord (g) \t g \in G \}$. If $G$ is nilpotent (in particular, if $G$ is abelian), then $\exp (G) = \max \{\ord (g) \t g \in G \}$, whence $\mathsf s (G) = \mathsf s' (G)$. Similarly, if $G$ is a dihedral group of order $2n$ or a dicyclic group of order $4n$ for even $n$, then we also have $\exp (G) = \max \{\ord (g) \t g \in G \}$. If $G$ is a dihedral group of order $2n$ with $n$ odd, then there are arbitrarily long sequences over $G$ having no product-one subsequence of length $\max \{\ord (g) \t g \in G \}$ (see the discussion after Definition \ref{2.1}). In the present paper, we provide the precise value of $\mathsf s (G)$ for the dihedral and dicyclic groups.

\smallskip
\begin{theorem} \label{1.1}~
Let $n \in \N$.
\begin{enumerate}
\item If $G$ is a dihedral group of order $2n$ for $n \ge 3$, then $\mathsf s (G) = \left\{ \begin{array}{ll}
                                                                                              3n & \hbox{if n is odd} \\
                                                                                              2n & \hbox{if n is even} \,.
                                                                                            \end{array} \right.$

\smallskip
\item If $G$ is a dicyclic group of order $4n$ for $n \ge 2$, then $\mathsf s (G) = \left\{ \begin{array}{ll}
                                                                                              6n & \hbox{if n is odd} \\
                                                                                              4n & \hbox{if n is even} \,.
                                                                                            \end{array} \right.$
\end{enumerate}
\end{theorem}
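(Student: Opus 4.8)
The proof splits according to the parity of $n$. When $n$ is odd one has $\exp(G)=|G|$: indeed $\exp(D_{2n})=\lcm(n,2)=2n=|D_{2n}|$, and $\exp(Q_{4n})=\lcm(2n,4)=4n=|Q_{4n}|$. Hence in these cases a product-one subsequence of length $\exp(G)$ is exactly a product-one subsequence of length $|G|$, so $\mathsf s(G)=\mathsf E(G)$ and the asserted values $3n$ and $6n$ follow at once from Theorem~A. So assume from now on that $n$ is even; then $\exp(D_{2n})=n$ and $\exp(Q_{4n})=2n$, and Theorem~A gives $\mathsf d(D_{2n})=\mathsf E(D_{2n})-2n=n$ and $\mathsf d(Q_{4n})=\mathsf E(Q_{4n})-4n=2n$, so that $\mathsf d(G)=\exp(G)$ in both even cases. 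For the lower bound I use the standard inequality $\mathsf s(G)\ge \mathsf d(G)+\exp(G)$: taking a sequence $T$ over $G$ of length $\mathsf d(G)$ with no nonempty product-one subsequence — so in particular no term of $T$ equals the identity $e$ — and setting $S:=T\cdot e^{[\exp(G)-1]}$ (i.e.\ $T$ together with $\exp(G)-1$ copies of $e$), any product-one subsequence of $S$ of length $\exp(G)$ would contain a nonempty subsequence of $T$; pulling the terms of that subsequence out of a product-one ordering flanked by copies of $e$ yields a nonempty product-one subsequence of $T$, a contradiction. Thus $\mathsf s(G)\ge \mathsf d(G)+\exp(G)$, which is $2n$ for $D_{2n}$ and $4n$ for $Q_{4n}$.

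It remains to prove, for even $n$, the matching upper bound: every sequence $S$ over $G$ of length $\mathsf d(G)+\exp(G)$ has a product-one subsequence of length $\exp(G)$. This is the core of the argument, and $D_{2n}$ and $Q_{4n}$ are handled in parallel. Write $G=\langle a,b\rangle$ with cyclic subgroup $C=\langle a\rangle$, so that $|C|=\exp(G)=\mathsf d(G)$, and decompose $S=S_0\cdot S_1$, where $S_0$ consists of the terms lying in $C$ and $S_1=\prod_{j=1}^{q}(a^{h_j}b)$ of the $q$ terms lying in the coset $Cb$. Since $G/C\cong C_2$, every product-one subsequence of $S$ uses an even number $2k$ of terms of $S_1$, and a short computation with the defining relations (using $b^2=e$ for $D_{2n}$ and $b^2=a^n$ for $Q_{4n}$) shows that $2k$ chosen terms of $S_1$, with their exponents split into $k$-sets $A$ and $B$, together with $\exp(G)-2k$ chosen terms $a^{g_1},\dots$ of $S_0$, admit a product-one ordering precisely when $\sum_{i\in A}h_i-\sum_{i\in B}h_i+\sum_{l}\varepsilon_l g_l$ can be made to vanish in $C$ for suitable signs $\varepsilon_l=\pm 1$, with an extra fixed shift depending only on $k$ and $n$ in the dicyclic case. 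This reduces the task to a weighted zero-sum problem over the cyclic group $C$.

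One then distinguishes cases according to $q$. If $q\le 1$, then $|S_0|\ge 2|C|-1=\mathsf s(C)$, and the Erd{\H o}s--Ginzburg--Ziv theorem (in the form $\mathsf s(C_m)=2m-1$) produces a zero-sum subsequence of $S_0$ of length $|C|=\exp(G)$. If $q$ is large, the product-one subsequence is built essentially inside $S_1$: one selects $\exp(G)$ of its terms and splits their exponents into two halves of equal sum in $C$ — always achievable once $S_1$ is long enough, possibly after a bounded correction that borrows a few terms of $S_0$ to repair parity — obtaining a product-one subsequence of length $\exp(G)$. The delicate range is the intermediate one, where $|S_0|=2|C|-q$ falls just below the Erd{\H o}s--Ginzburg--Ziv threshold while only a few reflection pairs from $S_1$ are available. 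When $q=2$ one invokes the inverse Erd{\H o}s--Ginzburg--Ziv theorem for $C$ (a sequence of length $2|C|-2$ over $C$ with no zero-sum subsequence of length $|C|$ is, up to translation, of the form $a^{[|C|-1]}(a^c)^{[|C|-1]}$ with $\gcd(c,|C|)=1$), which pins down $S_0$, and then checks directly that a single available reflection pair corrects the deficiency; for the remaining small values of $q$ one argues similarly, extracting structural information from $S_0$ and exploiting the several pairs available in $S_1$.

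The main obstacle, I expect, is precisely this intermediate regime, compounded by the extra bookkeeping the relation $b^2=a^n$ forces in the dicyclic case and by a handful of genuinely small values of $n$ (such as $D_8$, $Q_8$, $Q_{16}$) that must be checked by hand; the very small and very large ranges of $q$ are comparatively soft. Finally, the same case analysis, pushed a little further and keeping track of when each step is tight, should also yield the explicit characterization of the sequences of length $\mathsf s(G)-1$ having no product-one subsequence of length $\exp(G)$ that is announced in the abstract.
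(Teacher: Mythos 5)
The odd case and the lower bound are handled exactly as in the paper (via $\exp(G)=|G|$ and Theorem~A, resp.\ padding a product-one free sequence of length $\mathsf d(G)$ with copies of $1_G$), and your reduction of the even case to a signed zero-sum problem over the index-$2$ cyclic subgroup $C$ is sound as far as it goes. But the upper bound for even $n$ is the entire content of the theorem, and your proposal does not prove it: for $q$ "large" you assert that $\exp(G)$ coset terms can always be split into two halves of equal exponent-sum "once $S_1$ is long enough, possibly after a bounded correction," and for the intermediate range you say only that "one argues similarly." These are precisely the cases where the theorem could fail, and the large-$q$ case is not soft: choosing $n$ reflections out of $q$ and splitting their exponents into two $\tfrac n2$-sets of equal sum modulo $n$ is a zero-sum problem of \emph{prescribed} length $\tfrac n2$ in $C_n$, to which neither EGZ nor the Davenport constant applies directly (e.g.\ in $D_8$ the four reflections $\tau\bdot\tau\bdot\tau\bdot\alpha\tau$ admit no such split, so one is forced back into mixing with $S_0$). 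So there is a genuine gap exactly where you locate "the main obstacle."

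For comparison, the paper closes this gap by a different device. It proves a structural lemma (Lemma 3.1): a sequence of length $2n-1$ over $D_{2n}$ with no product-one subsequence of length $n$ has exactly one term outside $H$. The proof partitions $G$ into the four cosets of the index-$4$ subgroup $H_1=\la\alpha^2\ra$ and pairs up terms within each coset; each pair multiplies into $H_1\cong C_{n/2}$, producing a sequence over $C_{n/2}$ of length at least $2(\tfrac n2)-2$, i.e.\ exactly one short of $\mathsf s(C_{n/2})$. The inverse EGZ theorem (Lemma 2.4) then pins down this pair-sequence as $g^{[n/2-1]}\bdot h^{[n/2-1]}$, and a case analysis on how many cosets contain an odd number of terms yields the contradiction. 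This simultaneously avoids your unresolved large-$q$ case (all coset terms are absorbed into the pairing) and delivers the inverse statement of Theorem 1.2 for free. The dicyclic case is then reduced to the dihedral one via the quotient $G/\mathsf Z(G)$ rather than re-run in parallel, which disposes of the "extra bookkeeping" you anticipate from $\tau^2=\alpha^n$. If you want to salvage your route, the missing ingredient is precisely such a mechanism for producing many pairs with products in a proper cyclic subgroup where EGZ applies with the right threshold.
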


After discussing the direct problem, which asks for the precise value of group invariants, we consider the associated inverse problem, which asks for the structure of extremal sequences.
Structural results characterizing which sequences achieve equality are rare. Even in the abelian case, little is known precisely outside of groups of rank at most $2$. Among others, we refer to \cite{Br-Ri18a,Oh-Zh19} for very recent works associated the Davenport constants for specific non-abelian groups. In the present paper, we focus on the inverse problem associated to the Erd{\H{o}}s-Ginzburg-Ziv constant, and we prove the following results.

\smallskip
\begin{theorem} \label{1.2}~
Let $n \in \N_{\ge 3}$, let $G$ be a dihedral group of order $2n$, and let $S \in \mathcal F (G)$ be a sequence.
\begin{enumerate}
\item The following statements are equivalent{\rm \,:}
      \begin{enumerate}
      \item[(a)] $n \ge 4$ is even, $|S| = \mathsf s (G) - 1$, and $S$ has no product-one subsequence of length $\exp (G) = n$.

      \item[(b)] There exist $\alpha, \tau \in G$ such that $G = \la \alpha, \tau \t \alpha^{n} = \tau^{2} = 1_G \text{ and } \tau \alpha = \alpha^{-1}\tau \ra$ and $S = (\alpha^{r_1})^{[n-1]} \bdot (\alpha^{r_2})^{[n-1]} \bdot \alpha^{r_3}\tau$, where $r_1, r_2, r_3 \in [0,n-1]$ with $\gcd(r_1 - r_2, n) = 1$.
      \end{enumerate}

\item The following statements are equivalent{\rm \,:}
      \begin{enumerate}
      \item[(a)] $|S| = \mathsf E (G) - 1$ and $S$ has no product-one subsequence of length $|G|$.

      \item[(b)] There exist $\alpha, \tau \in G$ such that $G = \la \alpha, \tau \t \alpha^{n} = \tau^{2} = 1_G \text{ and } \tau \alpha = \alpha^{-1}\tau \ra$ and $S$ has one of the following forms{\rm \,:}
      \begin{enumerate}
      \item[(1)] $n \ge 4$ and $S = (\alpha^{r_1})^{[2n-1]} \bdot (\alpha^{r_2})^{[n-1]} \bdot \alpha^{r_3}\tau$, where $r_1, r_2, r_3 \in [0,n-1]$ with $\gcd(r_1 - r_2, n) = 1$.

      \item[(2)] $n = 3$, and either $S = 1^{[5]}_G \bdot \tau \bdot \alpha\tau \bdot \alpha^{2}\tau$ or $S = (\alpha^{t_1})^{[5]} \bdot (\alpha^{t_2})^{[2]} \bdot \alpha^{t_3}\tau$, where $t_1, t_2, t_3 \in [0,2]$ with $\gcd(r_1 - r_2, 3) = 1$.
      \end{enumerate}
      \end{enumerate}
\end{enumerate}
\end{theorem}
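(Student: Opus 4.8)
The plan is to prove both inverse statements by combining the exact values from Theorem~\ref{1.1} and Theorem~\ref{A} with a careful analysis of the structure forced on an extremal sequence $S$. Throughout, write $G = \langle \alpha, \tau \t \alpha^n = \tau^2 = 1_G,\ \tau\alpha = \alpha^{-1}\tau\rangle$, and split $S = S_0 \bdot S_1$, where $S_0$ collects the terms lying in the cyclic subgroup $C = \langle\alpha\rangle$ and $S_1$ collects the terms of the form $\alpha^j\tau$ (the ``reflections''). The first observation to record is that the product of any two reflections lies in $C$, and more precisely $(\alpha^i\tau)(\alpha^j\tau) = \alpha^{i-j}$; consequently any product-one subsequence must use an even number of terms from $S_1$, and a product-one subsequence of length $\exp(G) = n$ (with $n$ even) or of length $|G| = 2n$ can be built from reflections in pairs plus a product-one subsequence over the abelian group $C \cong C_n$. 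This reduces most of the analysis to the well-understood abelian invariants $\mathsf D(C_n)$, $\mathsf s(C_n) = 2n-1$, and $\mathsf E(C_n) = 2n-1$, together with their inverse theorems (Savchev--Chen / the rank-one inverse results), which I will quote.

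For part (1), I would argue that if $\mathsf v(S_1)\ge 2$ then one can extract two reflections $\alpha^i\tau, \alpha^j\tau$; if moreover $i \ne j$ one can also extract $\alpha^j\tau, \alpha^i\tau$, and in either case combining such a pair (contributing $\alpha^{\pm(i-j)}$) with subsequences of $S_0$ of appropriate length in $C_n$ produces a product-one subsequence of length $n$ unless $S_0$ is itself very restricted. This forces $\mathsf v(S_1) \le 1$ in an extremal sequence: if $\mathsf v(S_1)\ge 2$ with all reflections equal, say to $g=\alpha^i\tau$, then $g^{[2]}$ has product $1_G$ and one needs $n-2$ further terms from $S_0$ with product $1_G$, i.e. a product-one subsequence of length $n-2$ in $C_n$; the combinatorics of $C_n$ (for $n\ge 4$) rules this out only for extremely degenerate $S_0$, which can be enumerated. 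Having reduced to $|S_1| = 1$, say $S_1 = \alpha^{r_3}\tau$, the sequence $S_0$ over $C_n$ has length $\mathsf s(G) - 2 = 2n-2 = \mathsf s(C_n) - 1$ and has no product-one subsequence of length $n$ over $C_n$ — indeed such a subsequence would already be product-one in $G$ of the right length. By the inverse theorem for $\mathsf s(C_n)$, $S_0$ must be of the form $(\alpha^{r_1})^{[n-1]}\bdot(\alpha^{r_2})^{[n-1]}$ with $\gcd(r_1 - r_2, n) = 1$, which is exactly (b). The converse direction — that such an $S$ genuinely has no product-one subsequence of length $n$ — is a direct verification using the pairing identity above and the fact that the only length-$n$ product-one subsequences of $(\alpha^{r_1})^{[n-1]}\bdot(\alpha^{r_2})^{[n-1]}$ over $C_n$ do not exist.

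For part (2), the strategy is parallel but now the target length is $|G| = 2n$ and the relevant abelian fact is $\mathsf E(C_n) = 2n-1$ with its inverse theorem; the bound $|S| = \mathsf E(G) - 1 = 3n - 1$ comes from Theorem~\ref{A}. Again one splits off reflections in pairs: if $|S_1| \ge 2$ one extracts a pair of reflections and is left to find a product-one subsequence of length $2n-2$ over $C_n$ from $S_0$, and $\mathsf E(C_n) = 2n-1$ forces $|S_0| \le 2n - 2$, hence $|S_1| \ge 3n - 1 - (2n-2) = n+1$; iterating the pair-extraction and bounding carefully pins down $|S_1|$ to be odd (so one reflection is ``left over'') and small, leading again to $|S_1| = 1$ with $S_0$ over $C_n$ of length $3n-2$. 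But $\mathsf E(C_n) = 2n-1 < 3n - 2$ for $n \ge 2$, so this case is impossible for the abelian part alone unless... — this is the subtlety that forces the two genuinely different shapes in (b): either the long run still comes from $C$ (form (1), with $(\alpha^{r_1})^{[2n-1]}$ accounting for the excess via $\mathsf E(C_n)$'s extremal sequence $g^{[2n-1]}$ having no zero-sum of length... wait, it does; so the reflection must interact), or, when $n = 3$ is small, the three reflections $\tau, \alpha\tau, \alpha^2\tau$ can coexist with a long run $1_G^{[5]}$ without creating a product-one subsequence of length $6$. The hard part — and the step I expect to consume most of the work — is exactly this case distinction in (2): showing that $|S_1|$ is essentially forced to be $1$ except in the exceptional $n=3$ configuration, and then within the $|S_1|=1$ regime extracting from the failure of $\mathsf E(C_n)$ the precise extremal structure $(\alpha^{r_1})^{[2n-1]}\bdot(\alpha^{r_2})^{[n-1]}$; this requires a refined inverse analysis of sequences over $C_n$ of length strictly above $\mathsf E(C_n)$ that nonetheless avoid a length-$|G|$ product-one subsequence \emph{after} prepending one fixed reflection, which is where the $\gcd(r_1 - r_2, n) = 1$ condition and the $n=3$ anomaly both emerge. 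As always, the converse (sufficiency) directions in (1) and (2) are routine verifications and I would dispatch them at the end.
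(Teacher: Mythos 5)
Your high-level architecture matches the paper's: reduce to showing that an extremal sequence has exactly one term outside the cyclic subgroup $H=\langle\alpha\rangle$ (with the $n=3$ exception in part (2)), and then invoke inverse theorems over $C_n$ for the remaining terms. But the proposal stops precisely where the actual work begins. The entire substance of the theorem is the pair of structural lemmas (Lemmas \ref{3.1} and \ref{3.2} in the paper) asserting $|S_{G_0}|=1$; you acknowledge this (``the step I expect to consume most of the work'') but offer no argument beyond ``the combinatorics of $C_n$ rules this out only for extremely degenerate $S_0$, which can be enumerated.'' That is not an enumeration. The obstruction is real: a pair of reflections $\alpha^i\tau\bdot\alpha^j\tau$ contributes only the two elements $\alpha^{\pm(i-j)}$, and to close up a length-$n$ product one needs one of them to lie in $\Pi_{n-2}(S_0)$, which is exactly what can fail; controlling $|\Pi_{n-2}(S_0)|$ needs Lemma \ref{2.3}-type counting, and the paper's proof of Lemma \ref{3.1} in fact requires a finer device --- pairing the terms of $S$ two at a time to produce a sequence over $H_1=\langle\alpha^2\rangle\cong C_{n/2}$ of length at least $\mathsf s(C_{n/2})-1$, applying the cyclic inverse theorem there, and then running a long case analysis on the parities of the four class sizes. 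None of this is visible in your sketch.

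There are also two concrete errors. First, in part (2) you write that after extracting a pair of reflections one needs ``a product-one subsequence of length $2n-2$ over $C_n$'' and that ``$\mathsf E(C_n)=2n-1$ forces $|S_0|\le 2n-2$'': the invariant $\mathsf E(C_n)$ governs product-one subsequences of length $|C_n|=n$, not of length $2n-2$, so this deduction (and the ensuing bound $|S_1|\ge n+1$) does not follow. The cyclic input actually needed is that a sequence over $C_n$ of length $3n-2$ with no product-one subsequence of length $2n$ must equal $g^{[2n-1]}\bdot h^{[n-1]}$ with $\ord(gh^{-1})=n$ (Lemma \ref{2.5}), which is not one of the standard quoted inverse results and itself requires a proof from Lemma \ref{2.4}. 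Second, your text for part (2) visibly breaks down mid-argument (``unless\dots wait, it does; so the reflection must interact''), conceding that the dichotomy between form (1) and the exceptional $n=3$ sequence $1_G^{[5]}\bdot\tau\bdot\alpha\tau\bdot\alpha^2\tau$ is unresolved; this is exactly the content of Lemma \ref{3.2}, whose proof occupies several pages of case analysis. As it stands the proposal is a plausible plan with the two load-bearing lemmas missing.
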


\begin{theorem} \label{1.3}~
Let $n \in \N_{\ge 2}$, let $G$ be a dicyclic group of order $4n$, and let $S \in \mathcal F (G)$ be a sequence.
\begin{enumerate}
\item The following statements are equivalent{\rm \,:}
      \begin{enumerate}
      \item[(a)] $n \ge 2$ is even, $|S| = \mathsf s (G) - 1$, and $S$ has no product-one subsequence of length $\exp (G) = 2n$.

      \item[(b)] There exist $\alpha, \tau \in G$ such that $G = \la \alpha, \tau \t \alpha^{2n} = 1_G, \tau^{2} = \alpha^{n}, \text{and } \tau\alpha = \alpha^{-1}\tau \ra$ and $S$ has one of the following forms{\rm \,:}
      \begin{itemize}
      \item[(1)] $n \ge 4$ and $S = (\alpha^{r_1})^{[2n-1]} \bdot (\alpha^{r_2})^{[2n-1]} \bdot \alpha^{r_3}\tau$, where $r_1, r_2, r_3 \in [0,2n-1]$ with $\gcd(r_1 - r_2, 2n) = 1$.

      \item[(2)] $n = 2$ and $S \in \big\{ (\alpha^{t_1})^{[3]} \bdot (\alpha^{t_2})^{[3]} \bdot \alpha^{t_3}\tau, \,\, (\alpha^{t_1})^{[3]} \bdot (\alpha^{t_3}\tau)^{[3]} \bdot \alpha^{t_2}, \,\, (\alpha^{t_1})^{[3]} \bdot (\alpha^{t_3}\tau)^{[3]} \bdot \alpha^{t_4}\tau \big\}$, where $t_1, t_2, t_3, t_4 \in [0,3]$ such that $t_1$ is even, $t_2$ is odd, and $t_3 \not\equiv t_4 \pmod{2}$.
      \end{itemize}
      \end{enumerate}

\item The following statements are equivalent{\rm \,:}
      \begin{enumerate}
      \item[(a)] $|S| = \mathsf E (G) - 1$ and $S$ has no product-one subsequence of length $|G|$.

      \item[(b)] There exist $\alpha, \tau \in G$ such that $G = \la \alpha, \tau \t \alpha^{2n} = 1_G, \tau^{2} = \alpha^{n}, \text{and } \tau\alpha = \alpha^{-1}\tau \ra$ and $S$ has one of the following forms{\rm \,:}
      \begin{itemize}
      \item[(1)] $n \ge 3$ and $S = (\alpha^{r_1})^{[4n-1]} \bdot (\alpha^{r_2})^{[2n-1]} \bdot \alpha^{r_3}\tau$, where $r_1, r_2, r_3 \in [0,2n-1]$ with $\gcd(r_1 - r_2, 2n) = 1$.

      \item[(2)] $n = 2$ and $S \in \Big\{ \big( (\alpha^{t_1})^{[3]} \bdot (\alpha^{t_2})^{[3]} \bdot \alpha^{t_3}\tau \big) \bdot S_0, \, \big( (\alpha^{t_1})^{[3]} \bdot (\alpha^{t_3}\tau)^{[3]} \bdot \alpha^{t_2} \big) \bdot S_1, \, \big( (\alpha^{t_1})^{[3]} \bdot (\alpha^{t_3}\tau)^{[3]} \bdot \alpha^{t_4}\tau \big) \bdot S_1 \Big\}$, where $S_0 \in \big\{ (\alpha^{t_1})^{[4]}, \, (\alpha^{t_2})^{[4]} \big\}$, $S_1 \in \big\{ (\alpha^{t_1})^{[4]}, \, (\alpha^{t_3}\tau)^{[4]} \big\}$, and $t_1, t_2, t_3, t_4 \in [0,3]$ such that $t_1$ is even, $t_2$ is odd, and $t_3 \not\equiv t_4 \pmod{2}$.
      \end{itemize}
      \end{enumerate}
\end{enumerate}
\end{theorem}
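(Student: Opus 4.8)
The plan is to prove Theorem \ref{1.3} by building on Theorem \ref{1.2} and the direct values from Theorem \ref{1.1}, exploiting the structural relationship between a dicyclic group $G$ of order $4n$ and its dihedral quotient. Write $N = \la \alpha^n \ra \cong C_2$ for the unique central subgroup of order $2$ contained in $\la \alpha \ra$; then $G/N$ is dihedral of order $2n$ (when $n$ is even this is the key case, matching the even case of Theorem \ref{1.2}(1)). The implication (b)$\Rightarrow$(a) in each part is the routine direction: given the explicit candidate sequences, one verifies directly that they have the stated length $\mathsf s(G)-1$ or $\mathsf E(G)-1$ and that no reordering of any subsequence of the critical length multiplies to $1_G$. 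For the product-one condition one uses the normal form of elements of $G$ — every element is $\alpha^j$ or $\alpha^j\tau$, with $(\alpha^j\tau)(\alpha^k\tau) = \alpha^{j-k+n}$ and $(\alpha^j\tau)(\alpha^k\tau)(\alpha^\ell\tau) = \alpha^{j-k+\ell}\tau$ — so a subsequence is product-one iff it contains an even number of ``$\tau$-type'' terms and the resulting exponent sum (with appropriate signs coming from the order of the $\tau$-terms) vanishes mod $2n$; since at most one $\tau$-type term appears in the candidate sequences in the main ranges, this reduces to an arithmetic statement about $\alpha$-powers that the gcd conditions on $r_1-r_2$ preclude.

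For the hard direction (a)$\Rightarrow$(b), I would argue as follows. Suppose $|S| = \mathsf s(G)-1 = 4n-1$ (the $\mathsf E$ case is analogous with $6n-1$) and $S$ has no product-one subsequence of length $\exp(G)=2n$. First reduce the number of $\tau$-type terms: if $S$ had $\ge 2$ such terms, pair two of them $\alpha^j\tau,\alpha^k\tau$ into the single $\alpha$-power $\alpha^{j-k+n}$ and work inside the cyclic group $\la\alpha\ra \cong C_{2n}$, where the zero-sum structure is governed by the classical result $\mathsf s(C_{2n}) = 4n-1$ and its inverse theorem (extremal sequences are $g^{[2n-1]}\bdot h^{[2n-1]}$ with $\la g-h\ra = C_{2n}$); carefully tracking how a dihedral/dicyclic product-one subsequence of length $2n$ lifts or fails to lift from a cyclic zero-sum subsequence forces the number of $\tau$-type terms to be exactly $1$. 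Then $S = T \bdot \alpha^{r_3}\tau$ with $T$ a sequence of $4n-2$ terms from $\la\alpha\ra \cong C_{2n}$. The condition ``no product-one subsequence of length $2n$'' translates, for subsequences avoiding the $\tau$-term, into ``$T$ has no zero-sum subsequence of length $2n$''; but $|T| = 4n-2 = \mathsf s(C_{2n})-1$, so the inverse theorem for $\mathsf s(C_{2n})$ applies and gives $T = (\alpha^{r_1})^{[2n-1]}\bdot(\alpha^{r_2})^{[2n-1]}$ with $\gcd(r_1-r_2,2n)=1$. Finally one checks that including the $\tau$-term $\alpha^{r_3}\tau$ does not create a product-one subsequence of length $2n$ (it would need one more $\tau$-term, which is absent), so every such $T \bdot \alpha^{r_3}\tau$ genuinely works, yielding form (1).

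The main obstacle — and the step deserving the most care — is the reduction to at most one $\tau$-type term and, correlatively, the small cases $n=2$ (for $\mathsf s$) and $n=2,3$ (for $\mathsf E$), where the generic cyclic inverse theorem is not yet in force (note $\mathsf s(C_4)=7$ but the ``uniform'' extremal pattern can fail to be the only one, and the interplay between $\tau$-terms contributing $\alpha^{j-k+n}$ introduces the parity conditions ``$t_1$ even, $t_2$ odd, $t_3\not\equiv t_4$'' visible in the statement). For these small $n$ I would not rely on asymptotic machinery: instead I would enumerate, up to automorphism of $G$, the possible multiplicity patterns of $S$ — how many terms lie in $\la\alpha\ra$ versus the coset $\la\alpha\ra\tau$, and among the $\alpha$-powers which residues mod $2$ occur — and for each pattern either exhibit a product-one subsequence of the required length or record it as one of the listed exceptional forms. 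The dicyclic relation $\tau^2=\alpha^n$ (as opposed to $\tau^2=1$ in the dihedral case) is exactly what shifts exponents by $n$ when $\tau$-terms are paired, and this is why the dicyclic exceptional lists in part (2) differ from the dihedral ones in Theorem \ref{1.2}; keeping this bookkeeping exact across the four sub-statements is the real work.
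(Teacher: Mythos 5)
Your outline of the easy direction (b)$\Rightarrow$(a) and of the final step --- once one knows $S$ has exactly one term outside $H=\la\alpha\ra$, apply the cyclic inverse theorem (Lemma \ref{2.4}, resp.\ Lemma \ref{2.5}) to $S_H$ --- matches the paper. But the proposal has a genuine gap at exactly the point you flag as ``the main obstacle'': you never actually prove that an extremal sequence has only one term in $G_0 = G\setminus H$. The suggestion to ``pair two $\tau$-type terms $\alpha^j\tau,\alpha^k\tau$ into the single $\alpha$-power $\alpha^{j-k+n}$ and work inside $C_{2n}$'' does not give a clean reduction: replacing two terms by one changes lengths (a zero-sum of length $2n$ in the collapsed sequence corresponds to a product-one subsequence of length $2n+1$ in $S$, not $2n$), the collapsed element depends on the order and on which of the possibly many $G_0$-terms are paired with which, and when $S_{G_0}$ is large the set of products realizable from the $G_0$-part is governed by a nontrivial combinatorial analysis, not by a single cyclic inverse theorem. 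This is precisely the content of the paper's Lemmas \ref{4.1} and \ref{4.2} (which in turn reduce, via the central quotient $\psi\colon G\to G/\mathsf Z(G)$ onto a dihedral group, to the dihedral Lemmas \ref{3.1} and \ref{3.2}); those lemmas occupy most of the paper and proceed by partitioning the terms into the four classes $H_1,H_2,G_1,G_2$ and a lengthy case analysis on the parities and sizes of the multiplicities. Your proposal replaces all of this with one sentence of intent, so the hard direction is not established. Note also that the reduction ``to exactly one $\tau$-type term'' cannot hold uniformly, since for $n=2$ the listed extremal sequences include ones with three terms in $G_0$; you do defer $n=2$ to enumeration, but this shows the general pairing heuristic must break down somewhere, and you do not identify where.

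Two smaller points. First, the identity $(\alpha^j\tau)(\alpha^k\tau)(\alpha^\ell\tau)=\alpha^{j-k+\ell+n}\tau$, not $\alpha^{j-k+\ell}\tau$; the missing $\alpha^n$ is exactly the parity shift that distinguishes the dicyclic exceptional lists from the dihedral ones, so this slip would propagate into the $n=2$ bookkeeping. Second, your route bypasses the dihedral quotient entirely and tries to go straight to $C_{2n}$; the paper's use of $\psi$ is what lets it recycle Theorem \ref{1.1}.1 and Lemmas \ref{3.1}, \ref{3.2} to produce several disjoint subsequences $T_i$ with $\pi^*(T_i)\in\{1_G,\alpha^n\}$ and then combine two of them by a pigeonhole on the central element $\alpha^n$ --- an idea your sketch would need some substitute for.
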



\bigskip
\section{Preliminaries} \label{2}
\bigskip

Much of the following notation can be found in \cite{Oh-Zh19} and is repeated here for the convenience of the reader. We denote by $\N$ the set of positive integers and we set $\N_0 = \N \cup \{ 0 \}$. For each $k \in \N$, we also denote by $\N_{\ge k}$ the set of positive integers greater than or equal to $k$. For integers $a, b \in \Z$, $[a,b] = \{x \in \Z \mid a \le x \le b\}$ is the discrete interval. 

\smallskip
\noindent
{\bf Sequences over groups.} Let $G$ be a multiplicatively written finite group with identity element $1_G$ and let $G_0 \subset G$ be a subset. For an element $g \in G$, we denote by $\ord(g) \in \N$ the order of $g$, by $\exp (G) = \lcm \{ \ord(g) \t g \in G \}$ the exponent of $G$,  and by $\la G_0 \ra \subset G$ the subgroup generated by $G_0$.

The elements of the free abelian monoid $\mathcal F (G_0)$ will be called  {\it sequences} over $G_0$.  This terminology goes back to Combinatorial Number Theory. Indeed, a sequence over $G_0$ can be viewed as a finite unordered sequence of terms from $G_0$, where the repetition of elements is allowed. We briefly discuss our notation which follows  the monograph \cite[Chapter 10.1]{Gr13a}. In order to avoid confusion between multiplication in $G$ and multiplication in $\mathcal F (G_0)$, we denote multiplication in $\mathcal F (G_0)$ by the boldsymbol $\bdot$ and we use brackets for all exponentiation in $\mathcal F (G_0)$. In particular, a sequence $S \in \mathcal F (G_0)$ has the form
\begin{equation} \label{basic}
S \, = \, g_1 \bdot \ldots \bdot g_{\ell} \, = \, {\small \prod}^{\bullet}_{i \in [1,\ell]} \, g_i \, \in \, \mathcal F (G_0),
\end{equation}
where $g_1, \ldots, g_{\ell} \in G_0$ are the terms of $S$. For $g \in G_0$,
\begin{itemize}
\item $\mathsf v_g (S) = | \{ i \in [1,\ell] \t g_i = g \} |$ denotes the {\it multiplicity} of $g$ in $S$,

\smallskip
\item $\supp(S) = \{ g \in G_0 \t \mathsf v_{g} (S) > 0 \}$ denotes the {\it support} of $S$, and

\smallskip
\item $\mathsf h (S) = \max \{ \mathsf v_g (S) \t g \in G_0 \}$ denotes the {\it maximal multiplicity} of $S$.
\end{itemize}
A {\it subsequence} $T$ of $S$ is a divisor of $S$ in $\mathcal F (G_0)$ and we write $T \t S$. For a subset $H \subset G_0$, we denote by $S_H$ the subsequence of $S$ consisting of all terms from $H$. Furthermore, $T \t S$ if and only if $\mathsf v_g (T) \le \mathsf v_g (S)$ for all $g \in G_0$, and in such case, $S \bdot T^{[-1]}$ denotes the subsequence of $S$ obtained by removing the terms of $T$ from $S$ so that $\mathsf v_g \big( S \bdot T^{[-1]} \big) = \mathsf v_g (S) - \mathsf v_g (T)$ for all $g \in G_0$. On the other hand, we set $S^{-1} = g^{-1}_1 \bdot \ldots \bdot g^{-1}_{\ell}$ to be the sequence obtained by taking elementwise inverse from $S$.

Moreover, if $S_1, S_2 \in \mathcal F (G_0)$ and $g_1, g_2 \in G_0$, then $S_1 \bdot S_2 \in \mathcal F (G_0)$ has length $|S_1|+|S_2|$, \ $S_1 \bdot g_1 \in \mathcal F (G_0)$ has length $|S_1|+1$, \ $g_1g_2 \in G$ is an element of $G$, but $g_1 \bdot g_2 \in \mathcal F (G_0)$ is a sequence of length $2$. If $g \in G_0$, $T \in \mathcal F (G_0)$, and $k \in \N_0$, then
\[
g^{[k]}=\underset{k}{\underbrace{g\bdot\ldots\bdot g}}\in \mathcal F (G_0) \quad \text{and} \quad T^{[k]}=\underset{k}{\underbrace{T\bdot\ldots\bdot T}}\in \mathcal F (G_0) \,.
\]
Let $S \in \mathcal F (G_0)$ be a sequence as in \eqref{basic}. Then we denote by
\[
  \pi(S) \, = \, \{ g_{\tau (1)} \ldots  g_{\tau (\ell)} \in G \mid \tau \mbox{ a permutation of $[1, \ell]$} \} \, \subset \, G \quad \und \quad \Pi_n (S) \, = \, \underset{|T|=n}{\bigcup_{T \t S}} \pi (T) \, \subset \, G \,,
\]
the {\it set of products} and {\it $n$-products} of $S$, and more generally, the {\it subsequence products} of $S$ is denoted by
\[
  \Pi (S) \, = \, \bigcup_{n \ge 1} \Pi_n (S) \, \subset \, G \,.
\]
It can easily be seen that $\pi(S)$ is contained in a $G'$-coset, where $G'$ is the commutator subgroup of $G$. Note that $|S|=0$ if and only if $S = 1_{\mathcal F (G)}$, and in that case we use the convention that $\pi (S) = \{ 1_G \}$. The sequence $S$ is called
\begin{itemize}
\item a {\it product-one sequence} if $1_G \in \pi (S)$,

\smallskip
\item {\it product-one free} if $1_G \notin \Pi (S)$, and

\smallskip
\item {\it square-free} if $\mathsf h (S) \le 1$.
\end{itemize}
If $S = g_1 \bdot \ldots \bdot g_{\ell} \in \mathcal B (G)$ is a product-one sequence with $1_G = g_1 \ldots g_{\ell}$, then $1_G = g_i \ldots g_{\ell}g_1 \ldots g_{i-1}$ for every $i \in [1, \ell]$.
Every map of groups $\theta : G \rightarrow H$ extends to a monoid homomorphism $\theta : \mathcal F (G) \rightarrow \mathcal F (H)$, where $\theta (S) = \theta (g_1)\bdot \ldots \bdot \theta (g_{\ell})$.
If $\theta$ is a group homomorphism, then $\theta (S)$ is a product-one sequence if and only if $\pi(S) \cap \ker(\theta) \neq \emptyset$.
We denote by
\[
  \mathcal B (G_0) \, = \, \big\{ S \in \mathcal F (G_0) \t 1_G \in \pi(S) \big\}
\]
the set of all product-one sequences over $G_0$, and clearly $\mathcal B (G_0) \subset \mathcal F (G_0)$ is a submonoid. We denote by $\mathcal A (G_0)$ the set of irreducible elements of $\mathcal B (G_0)$ which, in other words, is the set of minimal product-one sequences over $G_0$.
Moreover,
\[
  \mathsf D (G_0) \, = \, \sup \big\{ |S| \t S \in \mathcal A (G_0) \big\} \, \in \, \N \cup \{ \infty \}
\]
is the {\it large Davenport constant} of $G_0$, and
\[
  \mathsf d (G_0) \, = \, \sup \big\{ |S| \t S \in \mathcal F (G_0) \mbox{ is product-one free } \big\} \, \in \, \N_0 \cup \{ \infty \}
\]
is the {\it small Davenport constant} of $G_0$.

\smallskip
\noindent
{\bf Ordered sequences over groups.} These are an important tool used to study (unordered) sequences over non-abelian groups. Indeed, it is quite useful to have related notation for sequences in which the order of terms matters. Thus, for a subset $G_0 \subset G$, we denote by $\mathcal F^{*} (G_0) = \big( \mathcal F^{*} (G_0), \bdot \big)$ the free (non-abelian) monoid with basis $G_0$, whose elements will be called the {\it ordered sequences} over $G_0$.

Taking an ordered sequence in $\mathcal F^{*} (G_0)$ and considering all possible permutations of its terms gives rise to a natural equivalence class in $\mathcal F^{*} (G_0)$, yielding a natural map
\[
  [ \bdot ] \, : \, \mathcal F^{*} (G_0) \quad \to \quad \mathcal F (G_0)
\]
given by abelianizing the sequence product in $\mathcal F^{*} (G_0)$. For any sequence $S \in \mathcal F (G_0)$, we say that an ordered sequence $S^{*} \in \mathcal F^{*} (G_0)$ with $[S^{*}] = S$ is an {\it ordering} of the sequence $S \in \mathcal F (G_0)$.

All notation and conventions for sequences extend naturally to ordered sequences. We sometimes associate an (unordered) sequence $S$ with a fixed (ordered) sequence having the same terms, also denoted by $S$. While somewhat informal, this does not give rise to confusion, and will improve the readability of some of the arguments.

For an ordered sequence $S = g_1 \bdot \ldots \bdot g_{\ell} \in \mathcal F^{*} (G)$, we denote by $\pi^{*} : \mathcal F^{*} (G) \to G$ the unique homomorphism that maps an ordered sequence onto its product in $G$, so
\[
  \pi^{*} (S) \, = \, g_1 \ldots g_{\ell} \, \in \, G \,.
\]
If $G$ is a multiplicatively written abelian group, then for every sequence $S \in \mathcal F (G)$, we always use $\pi^{*} (S) \in G$ to be the unique product, and $\Pi (S) = \bigcup \big\{ \pi^{*} (T) \t  T \text{ divides } S \und |T| \ge 1 \big\} \subset G$.

\begin{definition} \label{2.1}~
We denote by
\begin{itemize}
\item $\mathsf s (G)$ the smallest integer $\ell$ such that every sequence of length at least $\ell$ has a product-one subsequence of length $\exp (G)$, and

\smallskip
\item $\mathsf E (G)$ the smallest integer $\ell$ such that every sequence of length at least $\ell$ has a product-one subsequence of length $|G|$.
\end{itemize}
\end{definition}

Note that $\max \{ \ord(g) \t g \in G \} \le \exp (G)$, and equality holds for nilpotent groups $G$. If $G$ is a dihedral group of order $2n$ with $n$ even, then the equality holds true, whence our definition for $\mathsf s (G)$ coincides with the one defined in the papers \cite{Cz-Do-Ge16,Ha-Zh19}. However, when $n$ is odd, the concepts are different. To see this, let $n \in \N_{\ge 3}$ be odd and let $G = \la \alpha, \tau \t \alpha^{n} = \tau^{2} = 1_G \text{ and } \tau\alpha = \alpha^{-1}\tau \}$ be a dihedral group of order $2n$. Then $\max \{ \ord(g) \t g \in G \} = n$ is odd, and it follows by the relation between $\alpha$ and $\tau$ that every sequence over $G \setminus \la \alpha \ra$ cannot have a product-one subsequence of any odd length, in particular of length $n$.

\smallskip
\begin{lemma} \label{2.2}~
Let $G$ be a finite group. Then $\mathsf s (G) \ge \mathsf d (G) + \exp (G)$ and $\mathsf E (G) \ge \mathsf d (G) + |G|$.
\end{lemma}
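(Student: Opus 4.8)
The plan is to prove both inequalities at once by exhibiting explicit long sequences that have no product-one subsequence of the prescribed length, obtained by padding a maximal product-one free sequence with copies of the identity. Since $G$ is finite, $\mathsf d (G)$ is finite and is realized by some product-one free sequence $T \in \mathcal F (G)$ with $|T| = \mathsf d (G)$. I would then set
\[
  S \, = \, T \bdot 1_G^{[\exp (G) - 1]} \qquad \text{and} \qquad S' \, = \, T \bdot 1_G^{[|G| - 1]} \,,
\]
so that $|S| = \mathsf d (G) + \exp (G) - 1$ and $|S'| = \mathsf d (G) + |G| - 1$. By Definition~\ref{2.1}, it suffices to show that $S$ has no product-one subsequence of length $\exp (G)$ and that $S'$ has no product-one subsequence of length $|G|$, since this forces $\mathsf s (G) \ge |S| + 1 = \mathsf d (G) + \exp (G)$ and $\mathsf E (G) \ge |S'| + 1 = \mathsf d (G) + |G|$.

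Next I would carry out the verification for $S$, the argument for $S'$ being word-for-word the same with $|G|$ in place of $\exp (G)$. Suppose for contradiction that $W \t S$ is a product-one subsequence with $|W| = \exp (G)$. Decomposing $W = T' \bdot 1_G^{[k]}$ with $T' \t T$ and $k \in [0, \exp (G) - 1]$, the length constraint gives $|T'| = \exp (G) - k \ge 1$, so $T'$ is a \emph{nonempty} subsequence of $T$. Since adjoining copies of $1_G$ leaves every ordered product unchanged, $\pi (W) = \pi (T')$, and hence $1_G \in \pi (W) = \pi (T') \subseteq \Pi (T)$; as $|T'| \ge 1$, this contradicts that $T$ is product-one free. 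This completes the proof.

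The only conceptual point is that the usual abelian construction — padding $T$ with $\exp (G) - 1$ copies of an element of order $\exp (G)$ — is unavailable here, because such an element need not exist in a non-abelian group and, even when it does, its copies would sit interspersed among the terms of $T'$ inside an ordered product and could not be cancelled directly; padding with $1_G$ sidesteps both difficulties, since copies of $1_G$ drop out of any ordered product regardless of position. Beyond that, the matter is pure bookkeeping: that the supremum defining $\mathsf d (G)$ is attained (immediate for finite $G$), that the bound $k \le \exp (G) - 1$ forces $|T'| \ge 1$ so that product-one freeness of $T$ can be invoked, and that the degenerate case $G = \{1_G\}$ (where $\exp (G) = |G| = 1$ and $\mathsf d (G) = 0$) satisfies both bounds trivially. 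So I expect no genuine obstacle here: this inequality is the easy half, with the matching upper bounds being the substance of Theorems~\ref{A} and \ref{1.1}.
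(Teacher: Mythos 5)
Your proposal is correct and is exactly the paper's argument: pad a product-one free sequence of length $\mathsf d (G)$ with $\exp (G) - 1$ (respectively $|G| - 1$) copies of $1_G$. The paper leaves the verification that the padded sequence has no product-one subsequence of the prescribed length implicit; your write-up merely supplies that routine check.
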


\begin{proof}
We need to find a sequence of length $\mathsf d (G) + \exp (G) - 1$ (or $\mathsf d (G) + |G| - 1$ respectively) which has no product-one subsequence of length $\exp (G)$ (or $|G|$ respectively). Take a product-one free sequence $S$ over $G$ of length $|S| = \mathsf d (G)$. Then $S \bdot 1^{[\exp(G)-1]}_G$ (or $S \bdot 1^{[|G|-1]}_G$ respectively) is the desired sequence.
\end{proof}

For the rest of this section, we list the following preliminary results.

\smallskip
\begin{lemma} \cite[Lemma 7]{Ga-Lu08a} \label{2.3}~
Let $G$ be a finite abelian group of order $n$, let $r \in \N_{\ge 2}$, and let $S \in \mathcal F (G)$ be a sequence of length $|S| = n + r -2$. If $S$ has no product-one subsequence of length $n$, then $|\Pi_{n-2} (S)| = |\Pi_r (S)| \ge r - 1$.
\end{lemma}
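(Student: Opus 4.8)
The plan is to obtain the equality $|\Pi_{n-2}(S)|=|\Pi_r(S)|$ by complementation and then to bound $|\Pi_r(S)|$ from below by induction on $r$. Since $G$ is abelian and $|S|=n+r-2$, every subsequence $T\mid S$ of length $n-2$ has complement $S\bdot T^{[-1]}$ of length $r$ with $\pi^*(T)\,\pi^*(S\bdot T^{[-1]})=\pi^*(S)$, so $x\mapsto \pi^*(S)\,x^{-1}$ is a bijection $\Pi_{n-2}(S)\to\Pi_r(S)$; it therefore remains to show $|\Pi_r(S)|\ge r-1$. We may assume $n\ge 3$ (the cases $n\le 2$ being vacuous or immediate), and, since $\mathsf E(G)=\mathsf d(G)+|G|$ for abelian $G$ (Caro, Gao; see the Introduction), the hypothesis forces $|S|<\mathsf E(G)$, i.e. $r\le \mathsf d(G)+1$, in particular $r\le n$.

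Now I would induct on $r$. For $r=2$ one has $|\Pi_2(S)|\ge 1=r-1$ because $|S|=n\ge 2$. Let $r\ge 3$. For a term $g$ of $S$ put $S'=S\bdot g^{[-1]}$; then $|S'|=n+(r-1)-2$, the sequence $S'$ still has no product-one subsequence of length $n$, and splitting the length-$r$ subsequences of $S$ according to whether they use the removed copy of $g$ gives $\Pi_r(S)=\Pi_r(S')\cup g\,\Pi_{r-1}(S')$. By the inductive hypothesis $|\Pi_{r-1}(S')|\ge r-2$, hence $|\Pi_r(S)|\ge r-2$, and we are done unless the following extremal situation holds: for \emph{every} term $g$ of $S$ one has $|\Pi_r(S)|=r-2$ and $\Pi_r(S)=g\,\Pi_{r-1}(S\bdot g^{[-1]})$.

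It remains to exclude this extremal situation, and here the hypothesis is used essentially. First it forces $1\le \mathsf h(S)\le n-1$ and $|\supp(S)|\ge 2$, as otherwise some $g^{[n]}$ — a product-one sequence of length $n$ — would divide $S$. Choose a term $g$ with $\mathsf v_g(S)=\mathsf h(S)=:m$ and write $S=g^{[m]}\bdot R$ with $g\nmid R$, so $|R|=n+r-2-m$. Using $g^n=1_G$, the condition ``$S$ has no product-one subsequence of length $n$'' translates into ``$g^k\notin\Pi_k(R)$ for every $k$ in the interval $I=\big[\,n-m,\ \min(n,|R|)\,\big]$'', while $\Pi_r(S)=\bigcup_{j}g^{\,j}\,\Pi_{r-j}(R)$ over the admissible values of $j$. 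Fix an ordering $R=R_1\bdot\ldots\bdot R_{|R|}$ and consider the chain $p_t:=g^{\,r-t}\,\pi^*(R_1\bdot\ldots\bdot R_t)\in\Pi_r(S)$, with $t$ ranging over the admissible values: consecutive terms satisfy $p_{t+1}\ne p_t$ because no term of $R$ equals $g$, and $p_s\ne p_t$ whenever $s-t\in I$, since $p_s=p_t$ would force $\pi^*(R_{t+1}\bdot\ldots\bdot R_s)=g^{\,s-t}\in\Pi_{s-t}(R)$, against the translated hypothesis. When $\mathsf h(S)=n-1$ one has $|R|=r-1$ and $I=[1,r-1]$, the admissible $t$ form an interval of at least $r-1$ integers whose pairwise differences all lie in $I$, so the $p_t$ are pairwise distinct and $|\Pi_r(S)|\ge r-1$, contradicting $|\Pi_r(S)|=r-2$. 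The main obstacle is the remaining range $\mathsf h(S)\le n-2$, where $I$ begins above $1$ so that short-distance coincidences in the chain are not directly excluded; there $R$ is long ($|R|\ge r$, indeed $|R|\ge n$ once $\mathsf h(S)\le r-2$), and one must exploit this — for instance by extracting a short product-one subsequence of $R$ and recombining it with copies of $g$, or by iterating the whole argument on $R$ — to supply the missing products and contradict $|\Pi_r(S)|=r-2$. Carrying out this last case is where the real work lies.
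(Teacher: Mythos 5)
The paper does not prove this lemma at all --- it is imported verbatim from Gao--Lu \cite[Lemma 7]{Ga-Lu08a} --- so there is no internal proof to measure you against; your proposal has to stand on its own, and as written it does not establish the statement.

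The parts you do carry out are correct: the complementation bijection $x\mapsto\pi^{*}(S)x^{-1}$ from $\Pi_{n-2}(S)$ onto $\Pi_r(S)$ is fine (this is where abelianness is used), the reduction $r\le\mathsf d(G)+1\le n$ via $\mathsf E(G)=\mathsf d(G)+|G|$ is legitimate, the induction on $r$ does yield $|\Pi_r(S)|\ge r-2$, and the chain $p_t=g^{\,r-t}\pi^{*}(R_1\cdots R_t)$ genuinely settles the subcase $\mathsf h(S)=n-1$, where the forbidden-difference interval $I=[\,n-m,\min(n,|R|)\,]$ starts at $1$ and hence separates all the $p_t$. But the whole range $\mathsf h(S)\le n-2$ --- the generic case, carrying essentially all the content of the lemma --- is left open, and you say so yourself (``Carrying out this last case is where the real work lies''). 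There the hypothesis only forbids coincidences $p_s=p_t$ with $s-t\in I$, and since $I$ omits the small differences nothing in your argument prevents the chain from being periodic with short period and contributing only $r-2$ or fewer distinct values; the proposed remedies (extracting a short product-one subsequence of $R$ and recombining it with copies of $g$, or iterating on $R$) are sketched in one clause each, are not shown to produce an element of $\Pi_r(S)$ outside $g\,\Pi_{r-1}(S\bdot g^{[-1]})$, and it is not clear they can be made to work without a genuinely new idea. As it stands you have proved the lemma only under the extra hypothesis $\mathsf h(S)=n-1$ (together with the weaker bound $r-2$ in general), so the statement remains unproven; to complete it you should either supply the missing case or simply cite \cite[Lemma 7]{Ga-Lu08a} as the paper does.
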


The following lemma can be found in \cite[Theorem 1]{Ga97} directly, and also in \cite[Theorem 5.1.16 and Proposition 5.1.14]{Ge09a} as a consequence of the characterization of long sequences having required property.

\smallskip
\begin{lemma} \label{2.4}~
Let $G$ be a cyclic group of order $n \ge 3$ and let $S \in \mathcal F (G)$ be a sequence of length $|S| = 2n - k \ge \frac{3n-1}{2}$ for $k \ge 2$. If $S$ has no product-one subsequence of length $n$, then there exist $g, h \in G$ with $\ord(gh^{-1}) = n$ such that $g^{[u]} \bdot h^{[v]} \t S$, where $u, v \ge n - 2k + 3$ and $u + v \ge 2n - 2k + 2$.
\end{lemma}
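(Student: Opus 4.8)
This is an inverse (structural) refinement of the Erd{\H o}s--Ginzburg--Ziv theorem for cyclic groups, so I would attack it with the usual toolkit for such statements: translation normalization, reduction to a subsequence that avoids zero-sums on a long window of lengths, and Kneser's addition theorem. Write $G = \mathbb{Z}/n\mathbb{Z}$ additively, so ``product-one'' becomes ``zero-sum'' and $\Pi_j(S)$ is the set of sums of length-$j$ subsequences; for $T \t S$ let $\sigma(T) = \pi^{*}(T)$ be the sum of its terms. The first point is a translation invariance: for any $c \in G$ and any $T \t S$ with $|T| = n$ one has $\sigma(c + T) = \sigma(T) + nc = \sigma(T)$, so $S$ has a zero-sum subsequence of length $n$ if and only if $c + S$ does. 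I would therefore replace $S$ by a translate for which $0$ attains the maximal multiplicity, i.e. $\mathsf h(S) = \mathsf v_0(S) =: h$. This is harmless for the conclusion: if in the end $0$ and some $g$ with $\gcd(g,n) = 1$ turn out to be the two heavy elements, then translating back sends the pair $(0,g)$ to a pair whose difference is still a generator of $G$, which is exactly the requirement $\ord(gh^{-1}) = n$.

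Set $\ell = |S| = 2n - k$, so that the hypothesis $\ell \ge \tfrac{3n-1}{2}$ is the constraint $2 \le k \le \tfrac{n+1}{2}$. Let $S' = S_{G \setminus \{0\}}$ be the subsequence of nonzero terms, of length $|S'| = \ell - h$. The key reduction: since $0 \notin \Pi_n(S)$ and $\mathsf v_0(S) = h$, for every $W \t S'$ with $\max\{1, n-h\} \le |W| \le \min\{n, |S'|\}$ we must have $\sigma(W) \neq 0$, for otherwise $W \bdot 0^{[n - |W|]}$ would be a zero-sum subsequence of $S$ of length $n$. In other words, $S'$ is a sequence over $G \setminus \{0\}$ having no zero-sum subsequence whose length lies in the interval $I = \bigl[\max\{1, n-h\},\, \min\{n, |S'|\}\bigr]$. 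As a quantitative starting point, Lemma~\ref{2.3} applied to $S$ (with $r = n - k + 2$) already gives $|\Pi_{n-2}(S)| \ge n - k + 1$, so the near-equality case of that bound is what one must analyze.

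The heart of the proof is to show that $h \ge n - 2k + 3$ and that $S'$ is concentrated on a single value: there is $g \in G$ with $\gcd(g,n) = 1$ such that $S' \bdot g^{[-\mathsf v_g(S')]}$ has at most $k - 2$ terms, with $\mathsf v_g(S') \ge n - 2k + 3$ and $h + \mathsf v_g(S') \ge 2n - 2k + 2$. I would argue this by contradiction using Kneser's addition theorem: if $S'$ were not so concentrated, then after thinning each residue class to a bounded number of copies one can, by iterated Cauchy--Davenport/Kneser, make the subset-sum sets $\Pi_j(S')$ (and also sums of the form $\sigma(U) + (j - |U|)g$, using $\gcd(g,n)=1$ when a heavy $g$ is present) grow to all of $G$ for some admissible $j \in I$; this produces $0 \in \Pi_j(S')$ and hence a zero-sum subsequence of $S$ of length $n$, a contradiction. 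This is precisely where $k \le \tfrac{n+1}{2}$ enters: it forces $|S'|$ and the window $I$ to be large enough that any non-extremal configuration realizes $0$. Once concentration on some $g$ with $\mathsf v_g(S')$ large is established, $\gcd(g, n) = 1$ follows, since otherwise $\langle g \rangle \subsetneq G$ and all but the $\le k-2$ remaining terms of $S$ lie inside the small set of $\langle g\rangle$-cosets, from which one directly assembles a zero-sum subsequence of length exactly $n$; the displayed inequalities for $u = h$ and $v = \mathsf v_g(S')$ are then bookkeeping against $\ell = 2n - k$ and the length of $I$. Taking $g_1 = 0$, $g_2 = g$ after normalization and translating back yields $g^{[u]} \bdot h^{[v]} \t S$ with $u, v \ge n - 2k + 3$ and $u + v \ge 2n - 2k + 2$, as required.

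The main obstacle is exactly this concentration step: controlling the Kneser stabilizer and converting ``$\Pi_j(S')$ avoids $0$ for every $j$ in a long interval'' into the sharp multiplicity bounds, with the delicate bookkeeping that tracks how many terms are allowed to lie outside $g_1^{[u]}\bdot g_2^{[v]}$. In practice I would short-circuit this by invoking the known full characterization of sequences over $C_n$ of length $\ge \lceil \tfrac{3n-1}{2}\rceil$ having no zero-sum subsequence of length $n$ (the base case $k = 2$ being the classical inverse Erd{\H o}s--Ginzburg--Ziv theorem, $S = g^{[n-1]} \bdot h^{[n-1]}$ with $\gcd(g-h,n)=1$) and reading off the multiplicities $u$ and $v$; this is how the lemma is obtained in the references cited above.
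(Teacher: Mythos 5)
The paper does not actually prove Lemma \ref{2.4}: it is quoted from the literature, namely \cite[Theorem 1]{Ga97} (see also \cite[Theorem 5.1.16 and Proposition 5.1.14]{Ge09a}), and your closing paragraph --- invoke the known inverse characterization of long sequences over $C_n$ with no zero-sum subsequence of length $n$ and read off the multiplicities $u,v$ --- is exactly what the paper does, so your proposal takes the same route. Just be aware that the Kneser-based sketch preceding that paragraph is not itself a proof: the concentration step, which is where all the difficulty lives, is left unexecuted (as you acknowledge), so the citation is carrying the entire argument.
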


Finally, we conclude the following direct result from the previous lemma.

\smallskip
\begin{lemma} \label{2.5}~
Let $G$ be a cyclic group of order $n \ge 2$, and let $S \in \mathcal F (G)$ be a sequence of length $|S| = 3n-2$. Then the following statements are equivalent{\rm \,:}
\begin{enumerate}
\item[(a)] $S$ has no product-one subsequence of length $2n$.

\smallskip
\item[(b)] $S = g^{[2n-1]} \bdot h^{[n-1]}$, where $g, h \in G$ with $\ord (gh^{-1}) = n$.
\end{enumerate}
\end{lemma}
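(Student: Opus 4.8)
The implication (b)$\Rightarrow$(a) is a short direct computation. If $S = g^{[2n-1]} \bdot h^{[n-1]}$ with $\ord(gh^{-1}) = n$ and $T \t S$ has $|T| = 2n$, then $\mathsf v_h(T) \le n-1$ forces $T = g^{[a]} \bdot h^{[2n-a]}$ with $a \in [n+1,2n-1]$; since $G$ is abelian and $h^n = 1_G$, the product of $T$ equals $(gh^{-1})^a$, and as $gh^{-1}$ has order $n$ while no $a \in [n+1,2n-1]$ is divisible by $n$, this product is never $1_G$. The substance of the lemma is therefore the converse (a)$\Rightarrow$(b). The case $n = 2$ is immediate: there $|S| = 4 = 2n$, so (a) just says $\pi^*(S) \neq 1_G$, which over $C_2$ means the nontrivial element occurs with odd multiplicity, i.e.\ once or three times, and both sequences have the shape in (b). So assume $n \ge 3$.

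The plan for $n \ge 3$ has three steps. Step~1: since $|S| = 3n-2 \ge 2n-1$, the Erd\H{o}s--Ginzburg--Ziv theorem gives a product-one subsequence $T \t S$ with $|T| = n$. Step~2: set $S_1 = S \bdot T^{[-1]}$, so $|S_1| = 2n-2$. If $S_1$ had a product-one subsequence of length $n$, appending $T$ would yield a product-one subsequence of $S$ of length $2n$, against (a); hence $S_1$ has none, and Lemma~\ref{2.4} applied with $k = 2$ (legitimate because $2n-2 \ge \tfrac{3n-1}{2}$ precisely for $n \ge 3$) gives $g, h \in G$ with $\ord(gh^{-1}) = n$ and $g^{[u]} \bdot h^{[v]} \t S_1$, where $u, v \ge n-1$ and $u+v \ge 2n-2$. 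As $|S_1| = 2n-2$, this forces $u = v = n-1$, so $S_1 = g^{[n-1]} \bdot h^{[n-1]}$.

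Step~3 identifies $T = S \bdot S_1^{[-1]}$, and this is the part I expect to require the most care. The key point is the identity $\Pi_{n+1}\big(g^{[n-1]} \bdot h^{[n-1]}\big) = G \setminus \{g,h\}$: a length-$(n+1)$ subsequence of $g^{[n-1]} \bdot h^{[n-1]}$ must be $g^{[a]} \bdot h^{[n+1-a]}$ with $a \in [2,n-1]$ (a nonempty range exactly when $n \ge 3$), with product $(gh^{-1})^a h$; since $gh^{-1}$ generates $G$, these products run over all of $G$ except the values at $a \equiv 0$ and $a \equiv 1 \pmod n$, which are $h$ and $g$. Now if $T$ had a term $e \notin \{g,h\}$, then $e \in \Pi_{n+1}(S_1)$, so there is $Q \t S_1$ with $|Q| = n+1$ and $\pi^*(Q) = e$; then $Q \bdot (T \bdot e^{[-1]})$ is a subsequence of $S$ of length $2n$ whose product is $e \cdot e^{-1} = 1_G$ (using $\pi^*(T) = 1_G$), contradicting (a). Hence $\supp(T) \subseteq \{g,h\}$; writing $T = g^{[a]} \bdot h^{[n-a]}$, its product $(gh^{-1})^a$ is trivial only for $a \in \{0,n\}$, so $T = g^{[n]}$ or $T = h^{[n]}$, whence $S = g^{[2n-1]} \bdot h^{[n-1]}$ or $S = h^{[2n-1]} \bdot g^{[n-1]}$ --- both of the form in (b), since $\ord(hg^{-1}) = \ord(gh^{-1}) = n$. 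The main obstacle is thus Step~3, and within it the $\Pi_{n+1}$-computation together with the maneuver that converts a ``wrong'' term of $T$ into a product-one subsequence of length $2n$; Steps~1--2 are routine once Lemma~\ref{2.4} is in hand.
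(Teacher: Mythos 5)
Your proof is correct. Steps 1--2 (extract a product-one subsequence $T$ of length $n$ via $\mathsf s(G)=2n-1$, then apply Lemma~\ref{2.4} with $k=2$ to force $S\bdot T^{[-1]}=g^{[n-1]}\bdot h^{[n-1]}$) coincide with the paper's argument. Where you diverge is the identification of $T$: the paper runs an exchange argument that invokes Lemma~\ref{2.4} twice more --- first to produce a product-one subsequence $x\bdot g^{[r_1]}\bdot h^{[r_2]}$ of length $n$ containing a hypothetical extraneous term $x$ of $T$, then to pin down the structure of the complementary length-$(2n-2)$ sequence and derive $g^{[n]}\bdot h^{[n]}\t S$ --- whereas you simply compute $\Pi_{n+1}\big(g^{[n-1]}\bdot h^{[n-1]}\big)=G\setminus\{g,h\}$ and swap any extraneous term $e$ of $T$ for a length-$(n+1)$ subsequence of $S_1$ with product $e$, producing a product-one subsequence of length $2n$ directly. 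Your version is more self-contained (no further appeal to the inverse result) and the computation is elementary since $gh^{-1}$ generates $G$. You also treat $n=2$ by hand, which is cleaner than the paper's remark, since Lemma~\ref{2.4} is only stated for $n\ge 3$ and so cannot literally be applied there. Both routes are sound; yours trades the paper's reliance on the full strength of Lemma~\ref{2.4} for a short explicit product-set calculation.
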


\begin{proof}
(b) $\Rightarrow$ (a) Since any sequence of the form $g^{[n-1]} \bdot h^{[n-1]}$, where $g, h \in G$ with $\ord(gh^{-1}) = n$, has no product-one subsequence of length $n$, the sequence $S = g^{[2n-1]} \bdot h^{[n-1]}$ has no product-one subsequence of length $2n$.

\smallskip
(a) $\Rightarrow$ (b) Let $S \in \mathcal F (G)$ be a sequence of length $|S| = 3n-2$. Suppose that $S$ has no product-one subsequence of length $2n$. Then it follows by $\mathsf s (G) = 2n-1$ that $S$ has a product-one subsequence $S_1$ of length $|S_1| = n$, whence $|S \bdot S^{[-1]}_1| = 2n-2$. Then Lemma \ref{2.4} ensures that there exist $g, h \in G$ with $\ord(gh^{-1}) = n$ such that $S = S_1 \bdot g^{[n-1]} \bdot h^{[n-1]}$.
If $n = 2$, then we clearly obtain that $S_1 = g^{[2]}$ or $S_1 = h^{[2]}$. Thus we suppose that $n \ge 3$, and assume to the contrary that there exists $x \in \supp(S_1)$ such that $x \neq g, h$. Then it follows again by Lemma \ref{2.4} that $x \bdot g^{[n-2]} \bdot h^{[n-1]}$ has a product-one subsequence of length $n$, say $x \bdot g^{[r_1]} \bdot h^{[r_2]}$, where $r_1, r_2 \in [1,n-3]$ such that $r_1 + r_2 = n-1$. Then $(S_1 \bdot x^{[-1]}) \bdot g^{[n-1 - r_1]} \bdot h^{[n-1 - r_2]}$ is a sequence of length $2n-2$. Since $S$ has no product-one subsequence of length $2n$, it follows again by Lemma \ref{2.4} that $S_1 \bdot x^{[-1]} = g^{[r_1]} \bdot h^{[r_2]}$, whence $g^{[n]} \bdot h^{[n]}$ is a product-one subsequence of $S$ of length $2n$, a contradiction. Thus every element in $\supp(S_1)$ is either $g$ or $h$. If $g, h \in \supp(S_1)$, then $g^{[n]} \bdot h^{[n]}$ is a product-one subsequence of $S$ of length $2n$, a contradiction. Therefore we obtain either $S_1 = g^{[n]}$ or $S_1 = h^{[n]}$.
\end{proof}


\bigskip
\section{On Dihedral groups} \label{3}
\bigskip

In this section, let $n \in \N_{\ge 3}$ and let $G$ be a dihedral group of order $2n$. Then we denote by $H$ the cyclic subgroup of $G$ of index $2$, and by $G_0 = G \setminus H$. Note that $\mathsf d (G) = n$ and $\exp (G) = \lcm(2,n)$.

We prove the technical lemmas by improving the argument from \cite{Ba07}, and then drive our main results.

\smallskip
\begin{lemma} \label{3.1}~
Let $n \in \N_{\ge 4}$ be even and let $S \in \mathcal F (G)$ be a sequence of length $|S| = 2n - 1$. If $S$ has no product-one subsequence of length $n$, then $|S_{G_0}| = 1$.
\end{lemma}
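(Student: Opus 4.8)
The plan is to study the number $m = |S_{G_0}|$ of reflection terms of $S$ and to rule out $m = 0$ and $m \ge 2$. If $m = 0$, then $S = S_H$ is a sequence over the cyclic group $H \cong C_n$ of length $2n - 1 = \mathsf s(C_n)$, so it has a zero-sum (equivalently, product-one) subsequence of length $n = \exp(G)$, contradicting the hypothesis. Hence $m \ge 1$, and everything comes down to showing that $m \ge 2$ forces $S$ to have a product-one subsequence of length $n$, again a contradiction.

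Fix generators with $G = \la \alpha, \tau \t \alpha^{n} = \tau^{2} = 1_G \text{ and } \tau\alpha = \alpha^{-1}\tau \ra$, so $H = \la\alpha\ra$ and $G_0 = \{\alpha^i\tau\}$. The engine is the product identity in $G$: an ordered subsequence containing the reflections $\alpha^{a_1}\tau, \ldots, \alpha^{a_{2t}}\tau$ in this order, separated by blocks of $H$-terms with exponent sums $b_0, b_1, \ldots, b_{2t}$, has product $\alpha^{e}$ with $e = \sum_{i=0}^{2t}(-1)^{i} b_i + \sum_{j=1}^{2t}(-1)^{j+1}a_j$. Consequently, producing a product-one subsequence of length $n$ with $2t$ reflections amounts to choosing $U \t S_H$ with $|U| = n - 2t$ and a length-$2t$ submultiset $R$ of the reflection exponents, then splitting $U = U^{+}\bdot U^{-}$ arbitrarily and $R = R^{+}\bdot R^{-}$ into halves of size $t$, so that $\sigma(U^{+}) - \sigma(U^{-}) + \sigma(R^{+}) - \sigma(R^{-}) \equiv 0 \pmod n$. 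Since $n$ is even, a parity obstruction intervenes: for fixed $U$ and $R$ every reachable value of $e$ lies in the class of $\sigma(U) + \sigma(R) \bmod 2$, so one needs $\sigma(U) + \sigma(R)$ even, after which the congruence $2x \equiv \sigma(U) + \sigma(R) \pmod n$ is solvable and the task reduces to representing a prescribed residue modulo $n/2$ by a suitably constrained partial sum.

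The argument then splits according to how long $S_H$ is. When $m$ is small, $S_H$ is long enough to apply Lemma \ref{2.4}, which forces $S_H = x^{[u]} \bdot y^{[v]} \bdot W$ with $x - y$ a generator of $H$, with $u, v$ large and $|W|$ very small; this abundance of two nearly full ``directions'' lets one realize every residue of the correct parity modulo $n$, while the parity itself is adjusted by toggling the number $2t \in \{2,4\}$ of reflections used together with the parities of their exponents — or, in the degenerate situation where correcting the parity through the reflections is impossible (which forces $\supp(S_H) \subseteq \la\alpha^{2}\ra$), by a separate argument working inside $S_H$ regarded as a sequence over $C_{n/2}$ and pasting together zero-sum subsequences of length $n/2$. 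When $m$ is large, $S_H$ is short; then one puts almost the whole length $n$ onto reflections (an even number close to $n$) and uses the large supply of reflections, together with the few remaining $H$-terms, to absorb the target residue, once more after settling parity. The smallest case $n = 4$ is treated by hand.

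The step I expect to be the main obstacle is organizing this case distinction so that the parity obstruction (forced by $n$ even) never competes with the flexibility needed to hit $0$ modulo $n$: in each branch — small $m$ via Lemma \ref{2.4}, large $m$ via many reflections, the degenerate $\supp(S_H)\subseteq\la\alpha^{2}\ra$, and $n = 4$ — one must verify that there is simultaneously enough room to fix the parity and enough room to reach the target residue. Checking that Lemma \ref{2.4} leaves the required slack in the bounds on $u, v, |W|$, and disposing of the smallest values of $n$, is where the genuine work lies; the manipulations with the product identity itself are routine.
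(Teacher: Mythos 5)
Your opening moves are fine: $m=0$ is impossible because $|S_H|=2n-1=\mathsf s(C_n)$, the alternating-sum identity for a product with $2t$ reflections is correct, and the observation that $n$ even forces $\sigma(U)+\sigma(R)$ to be even before $2x\equiv\sigma(U)+\sigma(R)\pmod n$ can be solved is the right parity bookkeeping. But from that point on the proposal defers exactly the content of the lemma. The two load-bearing claims --- that when $S_H$ is long the two ``nearly full directions'' from Lemma \ref{2.4} let you realize every residue of the correct parity, and that when $m$ is large the reflections ``absorb the target residue'' --- are asserted, not proved, and neither is automatic. The hardest configuration is precisely $m=2$ (so $|S_H|=2n-3$, $k=3$ in Lemma \ref{2.4}): there the extremal structure $(\alpha^{r_1})^{[u_1]}\bdot(\alpha^{r_2})^{[u_2]}$ does \emph{not} immediately yield the needed residue; one is forced into the exceptional position $x=\tfrac n2$, must then split on whether $\gcd(r_1,n)>1$ or $=1$ and whether $r_2=0$, and must exhibit explicit orderings in each subcase. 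Your closing admission that this is ``where the genuine work lies'' is accurate: that work is the proof, and it is missing. There is also a structural hole in your case split on $m$: Lemma \ref{2.4} needs $|S_H|=2n-1-m\ge\tfrac{3n-1}2$, i.e.\ $m\le\tfrac n2-1$, while for $m$ only slightly above $\tfrac n2$ you cannot assemble a length-$n$ product-one subsequence ``mostly from reflections'' either, since $2t\le m<n$ forces at least $n-m$ terms from an $S_H$ that is now too short for your first branch. Nothing in the proposal covers this middle range.

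It is worth noting that the paper sidesteps all of this with a different device you do not have: partition $G$ into the four cosets of $H_1=\langle\alpha^2\rangle$, pair up terms within each coset so that each pair multiplies into $H_1\cong C_{n/2}$, and observe that the resulting sequence $T_1$ has length $\ge n-2=\mathsf s(C_{n/2})-1$ and can have no product-one subsequence of length $\tfrac n2$. This forces $|T_1|=n-2$ and, via Lemma \ref{2.4} applied at the level of $C_{n/2}$, the rigid form $T_1=g^{[n/2-1]}\bdot h^{[n/2-1]}$ --- uniformly in $m$. The subsequent case analysis (on $n=4$, on the number $t+\ell$ of reflections being $\ge 3$ or $=2$) is then driven by this structure rather than by the size of $m$. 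If you want to salvage your direct approach, you would at minimum need to (i) close the middle range of $m$, and (ii) carry out the residue-realization argument in the two-reflection case with the same level of detail as the paper's CASE 3; as it stands the proposal is a plan, not a proof.
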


\begin{proof}
Let $\alpha, \tau \in G$ such that $G = \la \alpha, \tau \t \alpha^{n} = \tau^{2} = 1_G \text{ and } \tau\alpha = \alpha^{-1}\tau \ra$. Then $H = \la \alpha \ra$.
Let $S \in \mathcal F (G)$ be a sequence of length $|S| = 2n - 1$ such that $S$ has no product-one subsequence of length $n$. Note that $|S_{G_0}|\ge 1$. Assume to the contrary that $|S_{G_0}| \ge 2$.

Let $H_1 = \langle \alpha^{2} \rangle$, $H_2 = H \setminus H_1$, $G_1 = \big\{ \alpha^{2k}\tau \, \t \, k \in [0, \frac{n}{2}-1] \big\}$, and $G_2 = G_0 \setminus G_1$.
Suppose	that $S_{H_1} = a_1 \bdot \ldots \bdot a_r$, $S_{H_2} = b_1 \bdot \ldots \bdot b_s$, $S_{G_1} = c_1 \bdot \ldots \bdot c_t$, and $S_{G_2} = d_1 \bdot \ldots \bdot d_{\ell}$, where $r, s, t, \ell \in \N_0$. Then $r + s + t + \ell = 2n - 1$ is odd. We set
\[
  \begin{aligned}
    T_1 \,\, = & \,\,\, (a_1 a_2) \bdot \ldots \bdot (a_{2 \lfloor \frac{r}{2} \rfloor -1} a_{2 \lfloor \frac{r}{2} \rfloor}) \bdot (b_1 b_2) \bdot \ldots \bdot (b_{2 \lfloor \frac{s}{2} \rfloor - 1} b_{2 \lfloor \frac{s}{2} \rfloor}) \bdot \\
       	       & \,\,\, (c_1 c_2) \bdot \ldots \bdot (c_{2 \lfloor \frac{t}{2} \rfloor - 1} c_{2 \lfloor \frac{t}{2} \rfloor}) \bdot (d_1 d_2) \bdot \ldots \bdot (d_{2 \lfloor \frac{\ell}{2} \rfloor - 1} d_{2 \lfloor \frac{\ell}{2} \rfloor}) \,.
  \end{aligned}
\]
Then $T_1 \in \mathcal F (H_1)$ of length $|T_1| = \blfloor \frac{r}{2} \brfloor + \blfloor \frac{s}{2} \brfloor + \blfloor \frac{t}{2} \brfloor + \blfloor \frac{\ell}{2} \brfloor \ge n - 2 = 2 \big( \frac{n}{2} \big) - 2$.
Since $S$ has no product-one subsequence of length $n$, we have that $T_1$ has no product-one subsequence of length $\frac{n}{2}$. It follows by $\mathsf s (H_1) = 2 \big( \frac{n}{2} \big) - 1$ that $|T_1| = n - 2$ and three elements of $\{ r, s, t, \ell \}$ are odd. By Lemma \ref{2.4}, there exist $g, h \in H_1$ with $\ord (gh^{-1}) = \frac{n}{2}$ such that
\[
  T_1 \, = \, g^{[\frac{n}{2}-1]} \bdot h^{[\frac{n}{2}-1]} \,.
\]

\smallskip
\noindent
{\bf CASE 1.} \, $n = 4$.
\smallskip

If $\{ r, s, t, \ell \} \subset \N$, then $a_1 b_1, \, c_1 d_1 \in H_2$. Since $|H_2| = 2$, we obtain that either $a_1 b_1 = c_1 d_1$ or $a_1 b_1 = (c_1 d_1)^{-1} = d_1 c_1$.
It follows that either $c_1 \bdot a_1 \bdot b_1 \bdot d_1$ or $a_1 \bdot b_1 \bdot c_1 \bdot d_1$ is a product-one subsequence of $S$ of length $4$, a contradiction. Thus one element of $\{ r, s, t, \ell \}$ must be zero.

Suppose that $r = 0$, and the case when $s = 0$ follows by the same argument.
Then all $s, t, \ell$ must be odd with $s + t + \ell = 7$. Since $|H_1| = |H_2| = |G_1| = |G_2| = 2$, we obtain that there exist subsequences $W_1, W_2$ of $S$ such that $W_1 = x^{[2]}$ and $W_2 = y^{[2]}$ for some $x, y \in G$. If $x, y \in G_0$, then $W_1 \bdot W_2$ is a product-one subsequence of $S$ of length $4$, a contradiction. If $x \in H_2$ and $y \in G_0$ (or else $x \in G_0$ and $y \in H_2$ respectively), then $x y x y = 1_G$ (or $y x y x = 1_G$ respectively), which implies that $W_1 \bdot W_2$ is a product-one subsequence of $S$ of length $4$, a contradiction. If $x, y \in H_2$, then $x = y$ or $x = y^{-1}$, which implies that $W_1 \bdot W_2$ is a product-one subsequence of $S$ of length $4$, a contradiction.

Suppose that $t = 0$, and the case when $\ell = 0$ follows by the same argument.
Then all $r, s, \ell$ must be odd with $\ell \ge 3$ and $r + s + \ell = 7$. Since $|H_1| = |H_2| = |G_1| = |G_2| = 2$, we obtain that there exist subsequences $W_1, W_2$ of $S$ such that $W_1 = x^{[2]}$ and $W_2 = y^{[2]}$, where $x \in G$ and $y \in G_2$. If $x \in G_2$, then $W_1 \bdot W_2$ is a product-one subsequence of $S$ of length $4$, a contradiction. If $x \in H$, then $x y x y = 1_G$, which implies that $W_1 \bdot W_2$  is a product-one subsequence of $S$ of length $4$, a contradiction.

\smallskip
\noindent
{\bf CASE 2.} \, $n \ge 6$ and $t + \ell \ge 3$.
\smallskip

Then $t \ge 2$ or $\ell \ge 2$. If there exist distinct $t_1, t_2 \in [1,t]$ (or $\ell_1, \ell_2 \in [1,\ell]$ respectively) such that $c_{t_1} = c_{t_2}$ (or $d_{\ell_1} = d_{\ell_2}$ respectively), then after renumbering if necessary, we may assume that $c_1 = c_2$ (or $d_1 = d_2$ respectively). By symmetry, we may suppose that $c_1 c_2 = g$ (or $d_1 d_2 = g$ respectively). Then $h^{[\frac{n-2}{4}]} \bdot c_1 \bdot h^{[\frac{n-2}{4}]} \bdot c_2$ \big(or $h^{[\frac{n-2}{4}]} \bdot d_1 \bdot h^{[\frac{n-2}{4}]} \bdot d_2$ respectively\big) is a product-one sequence, and by splitting elements equal to $h$ to be subsequence of $S$ of length $2$, we infer that $S$ has a product-one subsequence of length $n$, a contradiction. Therefore
\[
  \mathsf h (S_{G_0}) \, = \, 1 \,, \quad \mbox{ and hence } \quad t + \ell \, \le \, n \,.
\]

We only prove the case when $t \ge 2$, and the case when $\ell \ge 2$ follows by the same argument. Now we assume that $t \ge 2$, and without loss of generality that $c_1 c_2 = g$.

Suppose that $\frac{n}{2}$ is even. Then $\frac{n}{2} - 1 \ge 3$, and $h^{[\frac{n-4}{4}]} \bdot c_1 \bdot h^{[\frac{n-4}{4}]} \bdot g \bdot c_2$ is a product-one sequence. It follows by splitting the elements equals $h$ and $g$ to be subsequences of $S$ length $2$ that there exists a product-one subsequence of $S$ of length $n$, a contradiction.

Suppose that $\frac{n}{2}$ is odd, and both $t$ and $\ell$ are odd. Then we set $T_2 = T_1 \bdot (c_1 c_2)^{[-1]} \bdot (c_1 c_t)$ which still has no product-one subsequence of length $\frac{n}{2}$. Comparing with $T_1$, we have $c_1 c_2 = c_1 c_t$, and hence $c_2 = c_t$, contradicting that $\mathsf h (S_{G_0}) = 1$.

Suppose that $\frac{n}{2}$ is odd, and both $r$ and $s$ are odd. Then we set $T_3 = T_1 \bdot (a_1 a_2)^{[-1]} \bdot (a_1 a_r)$ which still has no product-one subsequence of length $\frac{n}{2}$. Comparing with $T_1$, we have $a_1 a_2 = a_1 a_r$, and we obtain by proceeding the same argument that $S_{H_1} = a^{[r]}_1$ and $S_{H_2} = b^{[s]}_1$.
Since $r + s = 2n - 1 - (t + \ell) \ge n - 1 \ge 5$, there exist $x, y \in H$ such that $x^{[2]} \bdot y^{[2]} \t S_H$. If $n \ge 10$, then $h^{[\frac{n-6}{4}]} \bdot c_1 \bdot h^{[\frac{n-6}{4}]} \bdot g \bdot x \bdot c_2\bdot x$ is a product-one sequence, and by splitting the elements equal to $h$ and $g$ to be subsequences of $S$ of length $2$, we infer that $S$ has a product-one subsequence of length $n$, a contradiction. If $n = 6$, then $x^{2} \neq y^{2}$, and hence either $c_1 \bdot x^{[2]} \bdot y \bdot c_2 \bdot y$ or $c_1 \bdot y^{[2]} \bdot x \bdot c_2 \bdot x$ is a product-one subsequence of $S$ of length $6$, a contradiction.

\smallskip
\noindent
{\bf CASE 3.} \, $n \ge 6$ and $t + \ell = 2$.
\smallskip

Then $|S_H| = 2n - 3$. Since $t$ or $\ell$ must be odd, we have $t=\ell=1$.
Suppose that $S_{G_1} = \alpha^{i}\tau$ and $S_{G_2} = \alpha^{j}\tau$, where $i, j \in [0,n-1]$.
Since $S_H$ has no product-one subsequence of length $n$, it follows by lemma \ref{2.4} that there exist $r_1, r_2 \in [0,n-1]$ such that $r_1$ is odd, $r_2$ is even, and  $\gcd(r_1 - r_2, n) = 1$ such that $(\alpha^{r_1})^{[u_1]} \bdot (\alpha^{r_2})^{[u_2]} \t S_H $ for some $u_1, u_2 \ge n-3 \ge \frac{n}{2}$ with $u_1 + u_2 \ge 2n - 4 \ge n + 2$.
Then there exist $v, w \in [0,n-1]$ such that
\[
  i \, \equiv \, v (r_1 - r_2) \, \pmod{n} \quad \und \quad j \, \equiv \, w (r_1 - r_2) \, \pmod{n} \,,
\]
and we set $x$ to be $|w-v|$ if $|w-v| \le \frac{n}{2}$, and $n - |w-v|$ if $|w-v| > \frac{n}{2}$.
Then $x \in [1,\frac{n}{2}]$ is odd, and $V = (\alpha^{r_1})^{[x]} \bdot (\alpha^{r_2})^{[x]} \bdot \alpha^{i}\tau \bdot \alpha^{j}\tau$ is a product-one subsequence of $S$ having even length. If $x \le \frac{n}{2} - 1$, then by adding even terms of either $\alpha^{r_1}$ or $\alpha^{r_2}$ to $V$, we obtain a product-one subsequence of $S$ of length $n$, a contradiction. Therefore $x = \frac{n}{2}$ is odd and $j \equiv \frac{n}{2} + i \pmod{n}$.

Suppose that $\gcd(r_1,n) > 1$. Since $r_1$ is odd, we have $\gcd(r_1,n) \t \frac{n}{2}$, and hence there exists $x_0 \in [1, \frac{n}{2}-1]$ such that $r_1 x_0 \equiv \frac{n}{2} \pmod{n}$. Since $\frac{n}{2}$ is odd, we obtain that $x_0$ is odd. Hence $n - 2 x_0 - 4 \ge 0$ and
\[
  (\alpha^{r_1})^{[x_0+\frac{n-2x_0-4}{4}]} \bdot (\alpha^i\tau) \bdot (\alpha^{r_1})^{[\frac{n-2x_0-4}{4}]} \bdot (\alpha^j\tau) \bdot (\alpha^{r_2})^{[\frac{n}{2}]}
\]
is a product-one subsequence of $S$ of length $n$, a contradiction.

Suppose that $\gcd(r_1,n) = 1$. If $r_2 = 0$, then since $u_1, u_2 \ge \frac{n}{2}$, it follows that $(\alpha^{r_2})^{[\frac{n}{2}-2]} \bdot (\alpha^{r_1})^{[\frac{n}{2}]} \bdot \alpha^{i}\tau \bdot \alpha^{j}\tau$ is a product-one subsequence of $S$ of length $n$, a contradiction. Thus we assume that $r_2 \neq 0$, and then there exists an odd $x_0 \in [1,n] \setminus \big\{ \frac{n}{2} \big\}$ such that $r_1 x_0 \equiv \frac{n}{2} +r_2 \pmod{n}$. Thus $|\frac{n}{2} - x_0| \ge 2$. If $x_0 \in [1, \frac{n}{2}-1]$, then $n -2 x_0 - 4 \ge 0$ and
\[
  (\alpha^{r_1})^{[x_0+\frac{n-2x_0-4}{4}]} \bdot (\alpha^i\tau) \bdot (\alpha^{r_1})^{[\frac{n-2x_0-4}{4}]} \bdot (\alpha^{r_2})^{[1+\frac{n-2}{4}]} \bdot (\alpha^j\tau) \bdot (\alpha^{r_2})^{[\frac{n-2}{4}]}
\]
is a product-one subsequence of $S$ of length $n$, a contradiction. If $x_0 \in [\frac{n}{2}+1, n]$, then $2 x_0 - n - 4 \ge 0$ and
\[
  (\alpha^{r_1})^{[n-x_0+\frac{2x_0-n-4}{4}]}\bdot(\alpha^i\tau)\bdot (\alpha^{r_1})^{[\frac{2x_0-n-4}{4}]}\bdot (\alpha^{r_2})^{[1+\frac{n-2}{4}]}\bdot(\alpha^j\tau)\bdot (\alpha^{r_2})^{[\frac{n-2}{4}]}
\]
is a product-one subsequence of $S$ of length $n$, a contradiction.
\end{proof}

\smallskip
\begin{proof}[Proof of Theorem \ref{1.1}.1]
If $n \ge 3$ is odd, then $\mathsf s (G) = \mathsf E (G)$ and the assertion follows from \cite{Ba07}. Suppose that $n \ge 4$ is even, and we assert that $\mathsf s(G) = 2n$. By Lemma \ref{2.2}, it suffices to show that every sequence of length $2n$ has a product-one subsequence of length $n$. Let $S \in \mathcal F (G)$ be a sequence of length $2n$, and assume to the contrary that $S$ has no product-one subsequence of length $n$.

Let $T \t S$ be a subsequence of length $2n-1$. Then $T$ has no product-one subsequence of length $n$. By Lemma \ref{3.1}, we have $|T_{G_0}| = 1$, and hence $|S \bdot T_{G_0}^{[-1]}| = 2n - 1$. Again by Lemma \ref{3.1}, we have $\bigl\vert (S\bdot T_{G_0}^{[-1]})_{G_0} \bigl\vert \, = 1$, which implies that $|S_{G_0}| = 2$.
Let $W \t S$ be a subsequence of length $2n-1$ with $S_{G_0} \t W$. Then Lemma \ref{3.1} ensures that $W$, and hence $S$, has a product-one subsequence of length $n$, a contradiction.
\end{proof}

\smallskip
\begin{lemma} \label{3.2}~
Let $n \in \N_{\ge 3}$ and let $S \in \mathcal F (G)$ be a sequence of length $|S| = 3n - 1$. If $S$ has no product-one subsequence of length $2n$, then either $|S_{G_0}| = 1$, or that $n = 3$ and $S = 1^{[5]}_G \bdot g_1 \bdot g_2 \bdot g_3$, where $\{ g_1, g_2, g_3 \} = G \setminus H$.
\end{lemma}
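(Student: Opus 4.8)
\emph{Proof sketch.}
The plan is to establish the contrapositive in the following sharp form: if $S$ has no product-one subsequence of length $2n$ and $|S_{G_0}| \ge 2$, then $n = 3$ and $S = 1_G^{[5]} \bdot g_1 \bdot g_2 \bdot g_3$ with $\{g_1,g_2,g_3\} = G \setminus H$. Fix $\alpha,\tau$ with $H = \la\alpha\ra$, and use throughout two elementary facts: every element of $G_0$ is an involution, and $(\alpha^i\tau)(\alpha^j\tau) = \alpha^{i-j} \in H$. Applying the sign epimorphism $G \to G/H \cong C_2$, every product-one subsequence of $S$ contains an even number of terms from $G_0$. One first disposes of $S_{G_0} = \emptyset$: then $S \in \mathcal F(H)$ has length $3n - 1 \ge \mathsf s(H) = 2n-1$, so $S$ has a product-one subsequence $S_1$ of length $n$, and $S \bdot S_1^{[-1]}$ of length $2n-1 = \mathsf s(H)$ yields a second one, $S_2$; then $S_1 \bdot S_2$ has length $2n$ and is product-one, a contradiction. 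So we may assume $|S_{G_0}| \ge 2$.

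The second step reduces to $\mathsf h(S_{G_0}) = 1$. If $\mathsf v_g(S) \ge 2$ for some $g \in G_0$, then $g^{[2]} \in \mathcal B(G)$, so it suffices to find a product-one subsequence of length $2n-2$ of $S \bdot (g^{[2]})^{[-1]}$, which has length $3n-3$; using that the cyclic part of this sequence is long and that conjugation by a term of $G_0$ inverts $H$, one obtains such a subsequence via Lemma \ref{2.4} (inside $H \cong C_n$) and an explicit choice of multiplicities, with a few sub-cases according to $n \bmod 2$ and the coarse shape of $S_H$. With $S_{G_0}$ now squarefree we have $|S_{G_0}| \le |G_0| = n$, so $|S_H| = 3n-1-|S_{G_0}| \ge 2n-1 = \mathsf s(H)$ and $S_H$ has a product-one subsequence of length $n$. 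If $n$ is even (so $\exp(G) = n$ and $\mathsf s(G) = 2n$ by Theorem \ref{1.1}), pick a product-one subsequence $S_1$ of $S$ of length $n$; then $S \bdot S_1^{[-1]}$ has length $2n-1$ and no product-one subsequence of length $n$ (else $S$ would have one of length $2n$), so Lemma \ref{3.1} gives $|(S \bdot S_1^{[-1]})_{G_0}| = 1$, i.e.\ every product-one subsequence of $S$ of length $n$ uses exactly $|S_{G_0}|-1 \ge 1$ terms from $G_0$; but the product-one subsequence of $S_H$ of length $n$ uses none, a contradiction. Hence $n$ must be odd.

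For odd $n$, pick two distinct terms $g,h$ dividing $S_{G_0}$ and set $b = gh \in H \setminus \{1_G\}$. Writing $\text{val}(\cdot)$ for the image in $H \cong \Z/n\Z$, inserting $g$ and $h$ into an ordering of a subsequence $W \t S_H$ shows that the products realized by $g \bdot h \bdot W$ are exactly the elements $\text{val}(W) - 2\,\text{val}(B) \pm \text{val}(b)$ of $H$ with $B \t W$. So $S$ has a product-one subsequence of length $2n$ as soon as $S_H$ does, or as soon as there exist $W \t S_H$ with $|W| = 2n-2$ and $B \t W$ with $2\,\text{val}(B) \equiv \text{val}(W) \pm \text{val}(b) \pmod n$; and when $|S_{G_0}| \ge 3$ one may additionally vary the pair $\{g,h\}$. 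Since $S_H$ has no product-one subsequence of length $2n$, a Lemma \ref{2.5}-type analysis of $S_H$ — of length $3n-1-|S_{G_0}|$, to which Lemma \ref{2.4} applies after removing a product-one subsequence of length $n$ once $n$ is large enough — forces $S_H = p^{[a]} \bdot q^{[c]}$ with $\ord(pq^{-1}) = n$; one then checks that the multiplicities $a,c$, the choice of $B$, and (for $|S_{G_0}| \ge 3$) the choice of pair leave enough freedom to solve the displayed congruence, unless $n$ and $|S_{G_0}|$ are too small for that freedom to survive. Chasing down the residual small cases — with $n = 3$, $|S_{G_0}| = 3$, $S_H = 1_G^{[5]}$ the only survivor, and $n = 3$ with $|S_{G_0}| \in \{2,4\}$ as well as all $n \ge 4$ producing none — yields exactly the stated exception.

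I expect the main obstacle to be this concluding bookkeeping: proving the Lemma \ref{2.5}-analogue that pins down $S_H = p^{[a]} \bdot q^{[c]}$ when $S_H$ is strictly shorter than $3n-2$ (so that Lemma \ref{2.4} no longer directly controls the lengths in play for small $n$), and then running the congruence-solvability argument uniformly enough that the single exceptional sequence is the only escape. The explicit product-one constructions in the $\mathsf h(S_{G_0}) \ge 2$ reduction and the finite but fiddly $n = 3$ verification are where I expect the least pleasant computations.
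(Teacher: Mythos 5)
Your overall architecture (rule out $S_{G_0}=\emptyset$, reduce to $\mathsf h(S_{G_0})=1$, split on the parity of $n$, and for odd $n$ analyse how two $G_0$-terms combine with a subsequence of $S_H$) is reasonable, and your even-$n$ endgame via Lemma \ref{3.1} is correct. But the two load-bearing steps are asserted rather than proved, and both have concrete obstructions. First, the reduction to $\mathsf h(S_{G_0})=1$: you propose to find a product-one subsequence of length $2n-2$ in $S\bdot (g^{[2]})^{[-1]}$ ``via Lemma \ref{2.4} inside $H$.'' That sufficient condition can simply fail: for $n=3$ and $S=g^{[2]}\bdot \alpha^{[6]}$ the complement $\alpha^{[6]}$ has no product-one subsequence of length $4$ (the lemma's conclusion is saved only because $\alpha^{[6]}$ itself has length $2n$), so the subsequence you are looking for need not exist. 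More seriously, $S\bdot(g^{[2]})^{[-1]}$ may consist mostly of $G_0$-terms, in which case Lemma \ref{2.4}, a statement about cyclic groups, says nothing. The paper's treatment of this case (its CASE 1, $r\ge 1$) is a genuine argument: it extracts a maximal even-length product-one subsequence of the ``unpaired'' part, exploits the identity $a x a = x^{-1}$ for $a\in G_0$, $x\in H$ to absorb paired terms symmetrically around a repeated $G_0$-element, and finishes with a pigeonhole comparing $\Pi_2$ of the $G_0$-part with $\Pi_2$ of the $H$-part inside $H\setminus\{1_G\}$. None of that is recoverable from ``an explicit choice of multiplicities.''

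Second, for odd $n$ your plan to ``force $S_H=p^{[a]}\bdot q^{[c]}$'' by a Lemma \ref{2.5}-type analysis cannot work when $|S_{G_0}|$ is large: with $S_{G_0}$ square-free you only get $|S_{G_0}|\le n$, so $|S_H|$ can be as small as $2n-1$; after removing one product-one subsequence of length $n$ the remainder has length $2n-1-|S_{G_0}|$, which drops below the threshold $\tfrac{3n-1}{2}$ of Lemma \ref{2.4} as soon as $|S_{G_0}|>\tfrac{n-1}{2}$, and for $|S_H|<2n$ the hypothesis ``$S_H$ has no product-one subsequence of length $2n$'' is vacuous, so no structure on $S_H$ alone is forced. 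The paper closes exactly this range differently: it proves an a priori bound (its assertion {\bf A}) on product-one subsequences of $S_{G_0}$, and for $3\le u\le n-1$ combines the counting bound $|\Pi_{n-2}(S_H\bdot T_1^{[-1]})|\ge n-u$ from Lemma \ref{2.3} with $|\Pi_2(S_{G_0})|\ge u-1$ and a pigeonhole in $H\setminus\{1_G\}$, rather than pinning down $S_H$. Your proposal has no substitute for this mechanism, and the identification of the single exceptional sequence at $n=3$, $|S_{G_0}|=3$ is deferred entirely to unspecified case-chasing. As it stands the proposal is a plausible outline with the hardest two-thirds of the proof missing.
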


\begin{proof}
Let $\alpha, \tau \in G$ such that $G = \la \alpha, \tau \t \alpha^{n} = \tau^{2} = 1_G \text{ and } \tau\alpha = \alpha^{-1}\tau \ra$.
Then $H = \la \alpha \ra$, and if $n = 3$, then $G \setminus H = \{ \tau, \alpha\tau, \alpha^{2}\tau \}$.
Let $S \in \mathcal F (G)$ be a sequence of length $|S| = 3n-1$ such that $S$ has no product-one subsequence of length $2n$. Note that $|S_{G_0}| \ge 1$.
Assume to the contrary that $|S_{G_0}| \ge 2$ and $S \neq 1^{[5]}_G \bdot \tau \bdot \alpha\tau \bdot \alpha^{2}\tau$ when $n = 3$.

\smallskip
1. We first prove the case when $n \ge 4$ is even. Let $g_1, g_2 \in G_0$ be such that $g_1 \bdot g_2 \t S_{G_0}$. Then $|S \bdot (g_1 \bdot g_2)^{[-1]}| = 3n-3 \ge 2n$, and by Theorem \ref{1.1}.1, there exists a product-one subsequence $T_1 \t S \bdot (g_1 \bdot g_2)^{[-1]}$ of length $n$. Then $|S \bdot T^{[-1]}_1| = 2n - 1$ and $| (S \bdot T^{[-1]}_1)_{G_0}| \ge |g_1 \bdot g_2| = 2$, whence Lemma \ref{3.1} ensures that there exists a product-one subsequence $T_2 \t S \bdot T^{[-1]}_1$ of length $n$. Therefore $T_1 \bdot T_2$ is a product-one subsequence of $S$ of length $2n$, a contradiction.

\smallskip
2. Now we prove the case when $n \ge 3$ is odd. By renumbering if necessary, we can assume that
\[
  S_{G_0} \, = \, \big( a^{[2]}_1 \bdot \ldots \bdot a^{[2]}_r \big) \bdot (c_1 \bdot \ldots \bdot c_u) \,,
\]
where $c_i \neq c_j$ for all distinct $i, j \in [1,u]$. Then $u \le n$. Similarly we set
\[
  S_H \, = \, \big( b^{[2]}_1 \bdot \ldots \bdot b^{[2]}_t \big) \bdot (e_1\bdot e_1^{-1})\bdot\ldots\bdot (e_s\bdot e_s^{-1})\bdot (d_1 \bdot \ldots \bdot d_v) \,,
\]
where $d_i \notin \{ d_j, d_j^{-1} \}$ for all distinct $i, j \in [1,v]$, and $\ord(b_k) \neq 2$ for all $k \in [1,v]$. Then $v \le \frac{n+1}{2}$.

\smallskip
\noindent
{\bf CASE 1.} \, $r \ge 1$.
\smallskip

Let $W_1$ be the maximal product-one subsequence of $c_1 \bdot \ldots \bdot c_u\bdot d_1 \bdot \ldots \bdot d_v$ of even length. Then $|W_1|\le 2n$ and $W = c_1 \bdot \ldots \bdot c_u \bdot d_1 \bdot \ldots \bdot d_v \bdot W^{[-1]}_1$ has no product-one subsequence of even length. If $|W| \le n-1$, then $2t + 2r + 2s + |W_1| \ge 2n$, and hence there exist $t_1 \in [0, t]$, $r_1 \in [0, r]$, and $s_1 \in[0, s]$ with $2t_1 + 2r_1 + 2s_1 + |W| = 2n$ such that
\[
  W_1 \bdot (e_1 \bdot e_1^{-1}) \bdot \ldots \bdot (e_{s_1} \bdot e_{s_1}^{-1}) \bdot (b_1 \bdot \ldots\bdot b_{t_1}) \bdot a_1 \bdot (b_1 \bdot \ldots \bdot b_{t_1}) \bdot a_1 \bdot a^{[2]}_2 \bdot \ldots \bdot a^{[2]}_{r_1}
\]
is a product-one subsequence of $S$ of length $2n$, a contradiction. Since $n$ is odd, we have $|W| \ge n+1$.

Suppose that $|W_H| \ge 2$. Let $\Omega = \big\{ x \in H \, \t \, x \in \Pi_2 (W_H) \, \text{ or } \, x^{-1} \in \Pi_2 (W_H) \big\}$. Since $\Pi_2 (W_H) \subset H \setminus \{ 1_G \}$, we have $|\Omega| \ge |W_H|$, and hence $|\Pi_2 (W_{G_0})| + |\Omega| \ge \big( |W_{G_0}|- 1 \big) + |W_H| \ge n = |H|$. It follows by $\Pi_2 (W_{G_0}) \cup \Omega \subset H \setminus \{ 1_G \}$ that there exist ordered sequences $T_1 \t W_{G_0}$ and $T_2 \t W_H$ with $|T_1| = |T_2| = 2$ such that $\pi^{*}(T_1) = \pi^{*}(T_2) \in \Pi_2 (W_{G_0}) \cap \Omega$, which implies that $T_1 \bdot T_2$ is a product-one subsequence of $W$ of length $4$, a contradiction to the maximality of $W_1$.

Suppose that $|W_H| = 1$. Then $|W_{G_0}| = n$. If $n\ge 5$, then $W_{G_0}$ has a product-one subsequence of length $4$, a contradiction to the maximality of $W_1$. If $n = 3$, then it is easy to verify that there are $g_1, g_2 \in \supp(W_{G_0})$ such that $T = a^{[2]}_1 \bdot g_1 \bdot g_2$ is a product-one sequence. Note that $4 = 2r + 2t + 2s + |W_1|$. If $W_1$ is non-trivial, then since $r \ge 1$, we must have $|W_1| = 2$, whence $W_1 \bdot T$ is a product-one subsequence of $S$ of length $6$, a contradiction. Thus $W_1$ is a trivial sequence, and then $r + t + s = 2$. Hence we infer that $S$ has a product-one subsequence of length $6$, a contradiction.

\smallskip
\noindent
{\bf CASE 2.} \, $r = 0$.
\smallskip

Then $2 \le |S_{G_0}| = u \le n$, and we proceed by the following assertion.

\smallskip
\begin{itemize}
\item[{\bf A.}] {\it For every non-trivial product-one subsequence $W$ of $S_{G_0}$, we have $u - |W| \ge \frac{n+1}{2}$.}
\end{itemize}

\begin{proof}[Proof of {\bf A}]
Assume to the contrary that $S_{G_0} = c_1 \bdot \ldots \bdot c_u$ has a non-trivial product-one subsequence $W$ such that $u - |W| \le \frac{n-1}{2}$. Since $|W|$ is even and
\[
  \bigl\vert \, \big( b^{[2]}_1 \bdot \ldots \bdot b^{[2]}_t \big) \bdot (e_1 \bdot e_1^{-1}) \bdot \ldots \bdot (e_s \bdot e_s^{-1}) \bdot W \, \bigl\vert \,\, \ge \,\, (3n - 1) - \frac{n+1}{2} - \frac{n-1}{2} \, = \, 2n - 1 \,,
\]
it follows that $\big( b^{[2]}_1 \bdot \ldots \bdot b^{[2]}_t \big) \bdot (e_1 \bdot e_1^{-1}) \bdot \ldots \bdot (e_s \bdot e_s^{-1}) \bdot W$ has a product-one subsequence of length $2n$, a contradiction.
\end{proof}

\noindent
{\bf SUBCASE 2.1.} \, $u = n$.
\smallskip

Since all $c_i$ are distinct, we may assume by renumbering if necessary that $c_i = \alpha^{i}\tau$ for every $i \in [1,n-1]$ and $c_u = c_n = \tau$.
If $n = 3$, then $S_{G_0} = \tau \bdot \alpha\tau \bdot \alpha^{2}\tau$ and $|S_H| = 5$. If $\alpha^{i} \in \Pi_4 (S_H)$ for some $i \in [1,2]$, then $S$ has a product-one subsequence of length $6$, a contradiction. Thus $\Pi_4 (S_H) = \{ 1_G \}$, which implies that $S_H = 1^{[5]}_G$ and $S = \tau \bdot \alpha\tau \bdot \alpha^{2}\tau \bdot 1^{[5]}_G$, a contradiction to our assumption.

Suppose that $n \ge  5$. If $\frac{n-1}{2}$ is even, then $\alpha\tau \bdot \ldots \bdot \alpha^{n-1}\tau$ is a product-one subsequence of $S_{G_0}$, a contradiction to {\bf A}. If $\frac{n-1}{2}$ is odd, then $n \ge 7$ and $\tau \bdot \alpha^{2}\tau \bdot \alpha^{3}\tau \bdot \ldots \bdot \alpha^{n-1}\tau$ is a product-one subsequence of $S_{G_0}$, again a contradiction to {\bf A}.

\smallskip
\noindent
{\bf SUBCASE 2.2.} \, $u = 2$.
\smallskip

Let $S_{G_0} = \alpha^{i}\tau \bdot \alpha^{j}\tau$ for distinct $i, j \in [0,n-1]$. If $n = 3$, then $|S_H| = 6$ and $\pi(S_{G_0}) = \{ \alpha^{i-j}, \, \alpha^{j-i} \} = \{ \alpha, \alpha^{2} \}$. If $\alpha^{k} \in \Pi_{4} (S_H)$ for some $k \in [1,2]$, then $S$ has a product-one subsequence of length $6$, a contradiction. Thus $\Pi_{4} (S_H) = \{ 1_G \}$, and hence $S_H = 1^{[6]}_G$ is a product-one subsequence of length $6$, a contradiction.

Suppose that $n \ge 5$. Since $|S_H| = 3n - 3$, it follows by $\mathsf s (H) = 2n - 1$ that there exists a product-one subsequence $T_1 \t S_H$ of length $n$, whence $|S_H \bdot T^{[-1]}_1| = 2n-3$. Since $S$ has no product-one subsequence of length $2n$, it follows by Lemma \ref{2.4} that there exist $r_1, r_2 \in [0,n-1]$ with $\gcd(r_1 - r_2, n) = 1$ such that
\[
  (\alpha^{r_1})^{[\gamma_1]} \bdot (\alpha^{r_2})^{[\gamma_2]} \, \bigl\vert \,\, S_H \bdot T^{[-1]}_1 \,, \,\, \mbox{ where } \,\, \gamma_1, \gamma_2 \, \ge \, n - 3 \, \ge \, \frac{n-1}{2} \und \gamma_1 + \gamma_2 \, \ge \, 2n - 4 \, \ge \, n + 1 \,.
\]
Since $\gcd(r_1-r_2, n) = 1$, there exists $x \in [0,n-1]$ such that $(r_1 - r_2) x \equiv j - i + r_1 \pmod{n}$. If $x \le \frac{n-1}{2}$, then $V = (\alpha^{r_1})^{[x-1]} \bdot \alpha^{i}\tau \bdot (\alpha^{r_2})^{[x]} \bdot \alpha^{j}\tau$ is a product-one subsequence of $S$. Since $\gamma_1 + \gamma_2 \ge n + 1$, by adding even terms of either $\alpha^{r_1}$ or $\alpha^{r_2}$ to $V$, we obtain that $S \bdot T_1^{[-1]}$ has a product-one subsequence of length $n$, a contradiction.
If $x \ge \frac{n+3}{2}$, then $n - x + 1 \le \frac{n-1}{2}$ and $V = (\alpha^{r_1})^{[n-x+1]} \bdot \alpha^{j}\tau \bdot (\alpha^{r_2})^{[n-x]} \bdot \alpha^{i}\tau$ is a product-one subsequence of $S$. Since $\gamma_1 + \gamma_2 \ge n + 1$, by adding even terms of either $\alpha^{r_1}$ or $\alpha^{r_2}$ to $V$, we obtain that $S \bdot T_1^{[-1]}$  has a product-one subsequence of length $n$, a contradiction. Thus $x = \frac{n+1}{2}$, and again by $\gcd(r_1-r_2, n) = 1$, there exists $y \in [0,n-1]$ such that $(r_1 - r_2) y \equiv i - j + r_1 \pmod{n}$. A similar argument shows that $y = \frac{n+1}{2} = x$, and hence $i - j + r_1 \equiv j - i + r_1 \pmod{n}$. Since $n$ is odd, it follows that $i = j$, a contradiction.

\smallskip
\noindent
{\bf SUBCASE 2.3.} \, $u \in [3, n-1]$.
\smallskip

Then $|S_H| = 3n - u - 1 \ge \mathsf s (H) = 2n - 1$ ensures that $S_H$ has a product-one subsequence $T_1$ of length $n$. Then $S_H \bdot T^{[-1]}_1$ is a sequence over $H$ of length $2n - u - 1$ which has no product-one subsequence of length $n$, and thus Lemma \ref{2.3} ensures that $\bigl\vert \Pi_{n-2} \big( S_H \bdot T^{[-1]}_1 \big) \bigl\vert \, \ge n-u$. Since all $c_i$ are distinct, we have that all $c_1c_2, c_1c_3, \ldots, c_1c_u$ are distinct, and hence $\bigl\vert \Pi_2 (c_1 \bdot \ldots \bdot c_u) \bigl\vert \, \ge  u - 1$.

\smallskip
\noindent
{\bf SUBCASE 2.3.1} \, $\bigl\vert \Pi_2 (c_1 \bdot \ldots \bdot c_u) \bigl\vert \, \ge u$.
\smallskip

If $\Pi_{n-2} \big( S_H \bdot T^{[-1]}_1 \big) \cap \Pi_2 (c_1 \bdot \ldots \bdot c_u) \neq \emptyset$, then there exists a product-one subsequence $T_2 \t S \bdot T^{[-1]}_1$ of length $n$, a contradiction.
Thus $\Pi_{n-2} \big( S_H \bdot T^{[-1]}_1 \big) \cap \Pi_2 (c_1 \bdot \ldots \bdot c_u) = \emptyset$, implying that $\bigl\vert \Pi_2 (c_1 \bdot \ldots \bdot c_u) \bigl\vert \, = u$ and $\bigl\vert \Pi_{n-2} \big( S_H \bdot T^{[-1]}_1 \big) \bigl\vert \, = n - u$.
Note that if $h \in \Pi_2 (c_1 \bdot \ldots \bdot c_u)$, then $h^{-1} \in \Pi_2 (c_1 \bdot \ldots \bdot c_u)$, for otherwise, $S \bdot T^{[-1]}_1$ must have a product-one subsequence of length $n$, again a contradiction.
Since $n$ is odd and $\Pi_2 (c_1 \bdot \ldots \bdot c_u) \subset H \setminus \{ 1_G \}$, we must have that $u$ is even. Since all $c_1c_2, c_1c_3, \ldots, c_1c_u$ are distinct, after renumbering if necessary, we may assume that $c_1 c_{2k} = (c_1 c_{2k+1})^{-1}$ for all $k \in [1, \frac{u}{2} - 1]$, and suppose that $c_i = \alpha^{r_i}\tau$ for all $i \in [1, u]$. Then $2r_1 \equiv r_2 + r_3 \equiv \ldots \equiv r_{u-2} + r_{u-1} \pmod{n}$, and hence for every $k \in \N_0$ such that $(2 + 4k) + 3 \le u - 1$, we have
\[
  c_{2+4k} \, c_{(2+4k)+2} \, c_{(2+4k)+1} \, c_{(2+4k)+3} \,\, = \,\, \alpha^{r_{2+4k}+r_{(2+4k)+1}-r_{(2+4k)+2}-r_{(2+4k)+3}} \,\, = \,\, 1_G \,.
\]

Suppose that $u \ge 6$ with $4 \t u-2$. Then $W = c_2 \bdot \ldots \bdot c_{u-1}$ is a product-one sequence of length $u-2$. Since $n \ge u \ge 6$, we have $u - |W| = 2 \le \frac{n-1}{2}$, a contradiction to {\bf A}.

Suppose that $u \ge 6$ with $4 \nmid u-2$.  Then $4 \t u-4$ and $W = c_2 \bdot \ldots \bdot c_{u-3}$ is a product-one sequence of length $u-4$. Since $u \ge 8$, we obtain $n \ge 9$, and hence $u - |W| = 4 \le \frac{n-1}{2}$, again a contradiction to {\bf A}.

Suppose that $u = 4$. Then $c_3c_4\in \Pi_{2} (c_1 \bdot \ldots \bdot c_4) = \{ c_1c_2, \, c_1c_3, \, c_1c_4, \, c_4c_1 \}$. Since all $c_i$ are distinct, we have $c_3c_4 \neq c_1c_4$. If $c_3c_4 = c_1c_2$ or $c_3c_4 = c_1c_3 = c_2c_1$, then $c_1 \bdot \ldots \bdot c_4$ is a product-one sequence, a contradiction to {\bf A}, whence $c_3c_4 = c_4c_1$. Similarly we can prove that $c_2c_4 = c_4c_1$, which implies that $c_2c_4 = c_3c_4$, and thus $c_2 = c_3$, a contradiction.

\smallskip
\noindent
{\bf SUBCASE 2.3.2} \, $\bigl\vert \Pi_2 (c_1 \bdot \ldots \bdot c_u) \bigl\vert \, = u - 1$.
\smallskip

For every $i \in [1,u]$, we set $c_i = \alpha^{r_i}\tau$. Since $\bigl\vert \Pi_2 (c_1 \bdot \ldots \bdot c_u) \bigl\vert \, = u-1$, it follows that
\[
  \Pi_2 (c_1 \bdot \ldots \bdot c_u) \, = \, \{ c_1c_2, \ldots, c_1c_u \} \, = \, \big\{ \alpha^{r_1 - r_2}, \alpha^{r_1 - r_3}, \ldots, \alpha^{r_1 - r_u} \big\} \, = \, \big\{ \alpha^{r_2 - r_1}, \alpha^{r_2 - r_3}, \ldots, \alpha^{r_2 - r_u} \big\} \,.
\]
By multiplying all the elements of each set, we have the modulo equation
\[
  (u-1) r_1 - (r_2 + r_3 + \ldots + r_u) \, \equiv \, (u-1)r_2 - (r_1 + r_3 + \ldots + r_u) \, \pmod{n} \,,
\]
which implies that $ur_1 \equiv ur_2 \pmod{n}$. Similarly, $ur_i \equiv ur_j \pmod{n}$ for all distinct $i, j \in [1,u]$. Thus all elements $\alpha^{r_i - r_j}$ are in the subgroup of $H$ having order $\gcd(u,n)$. But $u = \bigl\vert \{ 1_G, \alpha^{r_1 - r_2}, \ldots, \alpha^{r_1 - r_u} \} \bigl\vert \, \le \gcd(u,n) \le u$, whence $\gcd(u,n) = u$ and $u \t n$. Since $n$ is odd, $u$ is also odd, and $\Pi_2 (c_1 \bdot \ldots \bdot c_u) = \big\{ \alpha^{\frac{n}{u}}, \ldots, \alpha^{(u-1)\frac{n}{u}} \big\}$. By renumbering if necessary, we can assume that
\[
  c_1 \, = \, \alpha^{r_1}\tau, \quad c_2 \, = \, \alpha^{r_1 + \frac{n}{u}}\tau, \quad \ldots, \quad c_u \, = \, \alpha^{r_1 + (u-1)\frac{n}{u}}\tau \,.
\]

Suppose that $u \ge 5$ and $4 \t u-1$. Then $c_1 \bdot c_2 \bdot c_4 \bdot c_3$ is a product-one sequence, and thus $c_1 \bdot \ldots \bdot c_{u-1}$ is a product-one subsequence of $S_{G_0}$ of length $ u-1$, a contradiction to {\bf A}.

Suppose that $u \ge 5$ and $4 \nmid u-1$. Then $u \ge 7$, and $c_1 \bdot c_3 \bdot c_5 \bdot c_4 \bdot c_7 \bdot c_6$ is a product-one sequence. Thus $c_1 \bdot c_3 \bdot c_4 \bdot \ldots \bdot c_{u-1}$ is a product-one subsequence of $S_{G_0}$ of length $u-1$, again a contradiction to {\bf A}.

Suppose that $u = 3$. Since $u \t n$ and $u\le n-1$, we obtain $n \ge 9$, and since $|S_H| = 3n - 4 \ge \mathsf s(H) = 2n - 1$, we obtain that $S_H$ has a product-one subsequence $T_1$ of length $n$. Then $|S_H \bdot T^{[-1]}_1| = 2n - 4$ and $S_H \bdot T_1^{[-1]}$ has no product-one subsequence of length $n$. It follows by Lemma \ref{2.4} that there exist $r_1, r_2 \in [0,n-1]$ with $\gcd(r_1 - r_2, n) = 1$ such that
\[
  (\alpha^{r_1})^{[\gamma_1]} \bdot (\alpha^{r_2})^{[\gamma_2]} \, \bigl\vert \, S_H \bdot T^{[-1]}_1 \quad \mbox{ for some } \,\, \gamma_1, \gamma_2 \, \ge \, n - 5 \, \ge \, \frac{n-1}{2} \und \gamma_1 + \gamma_2 \, \ge \, 2n - 6 \, \ge \, n + 3 \,.
\]
Since $\gcd(r_1-r_2, n) = 1$, there exists $x \in [0,n-1]$ such that $(r_1-r_2) x \equiv \frac{n}{3} + r_1 \pmod{n}$.
If $x \le \frac{n-1}{2}$, then $V = (\alpha^{r_1})^{[x-1]} \bdot c_1 \bdot (\alpha^{r_2})^{[x]} \bdot c_2$ is a product-one subsequence of $S$. Since $\gamma_1 + \gamma_2 \ge n+3$, by adding even terms of either $\alpha^{r_1}$ or $\alpha^{r_2}$ to $V$, we obtain that $S \bdot T_1^{[-1]}$ has a product-one subsequence of length $n$, a contradiction.
If $x \ge \frac{n+3}{2}$, then $n - x + 1 \le \frac{n-1}{2}$ and $V = (\alpha^{r_1})^{[n-x+1]} \bdot c_2 \bdot (\alpha^{r_2})^{[n-x]} \bdot c_1$ is a product-one subsequence of $S$. Since $\gamma_1 + \gamma_2 \ge n+3$, by adding even terms of either $\alpha^{r_1}$ or $\alpha^{r_2}$ to $V$, we obtain that $S \bdot T_1^{[-1]}$ has a product-one subsequence of length $n$, a contradiction. Thus $x = \frac{n+1}{2}$, and again by $\gcd(r_1-r_2, n) = 1$, there exists $y \in [0,n-1]$ such that $(r_1-r_2) y \equiv \frac{2n}{3} + r_1 \pmod{n}$. A similar argument shows that $y = \frac{n+1}{2} = x$, and hence $\frac{n}{3} + r_1 \equiv \frac{2n}{3} + r_1 \pmod{n}$, a contradiction.
\end{proof}

\smallskip
\begin{proof}[Proof of Theorem \ref{1.2}]
1. (b) $\Rightarrow$ (a) Suppose that $\alpha, \tau \in G$ are generators, $H = \la \alpha \ra$, and $S = (\alpha^{r_1})^{[n-1]} \bdot (\alpha^{r_2})^{[n-1]} \bdot \alpha^{r_3}\tau$, where $r_1, r_2, r_3 \in [0,n-1]$ with $\gcd(r_1 - r_2, n) = 1$. It follows that $S_H = g^{[n-1]} \bdot h^{[n-1]}$ with $g, h \in H$ and $\ord(gh^{-1}) = n$. Then $S_H$ has no product-one subsequence of length $n$, and thus the assertion follows.

\smallskip
(a) $\Rightarrow$ (b) Let $n \ge 4$ be even. Then $\mathsf s (G) = 2n$ by Theorem \ref{1.1}.1. Thus the assertion follows by Lemmas \ref{3.1} and \ref{2.4}.

\medskip
2. (b) $\Rightarrow$ (a) Clearly, $S = 1^{[5]}_G \bdot g_1 \bdot g_2 \bdot g_3$, where $n = 3$ and $\{ g_1, g_2, g_3 \} = G \setminus H$, has no product-one subsequence of length $6$, and the remains follow from Lemma \ref{2.5}.

\smallskip
(a)$\Rightarrow$(b) Let $n \ge 3$. Note that $\mathsf E(G) = 3n$. Then the assertion follows by Lemmas \ref{3.2} and \ref{2.5}.
\end{proof}


\bigskip
\section{On Dicyclic groups} \label{4}
\bigskip

In this section, let $n \in \N_{\ge 2}$ and let $G$ be a dicyclic group of order $4n$. Then we denote by $H$ a cyclic subgroup of $G$ of index $2$, and by $G_0 = G \setminus H$. Note that $H$ is unique if $n \ge 3$, and we observe that $\mathsf d (G) = 2n$ and $\exp (G) = \lcm(4,2n)$. We denote by 
\[
  \psi \, \colon \, G\, \rightarrow \, \overline{G} \, = \, G / \mathsf Z (G) \,,
\]
where $\mathsf Z (G)$ is the center of $G$, the natural epimorphism. If $n \ge 3$, then $\overline{G}$ is a dihedral group of order $2n$, and thus we also denote by $\overline{H}$ the cyclic subgroup of $\overline{G}$ of index $2$, and by $\overline{G}_0 = \overline{G} \setminus \overline{H}$.

Likewise, we prove the technical lemmas by using results from the dihedral case and by improving the argument from \cite{Ba07}, and then derive our main results.

\smallskip
\begin{lemma} \label{4.1}~
Let $n \in \N_{\ge 2}$ be even and let $S \in \mathcal F (G)$ be a sequence of length $|S| = 4n - 1$.
\begin{enumerate}
\item If $n \ge 4$ and $S$ has no product-one subsequence of length $2n$, then $|S_{G_0}| = 1$.

\smallskip
\item If $n = 2$ and $S$ has no product-one subsequence of length $4$, then for any $\alpha, \tau \in G$ with $G = \la \alpha, \tau \t \alpha^{4} = 1_G, \tau^{2} = \alpha^{2}, \text{ and } \tau\alpha = \alpha^{-1}\tau \ra$, we have that $S$ has one of the following forms{\rm \,:}
\[\hspace{35pt}
  S \, \in \, \big\{ (\alpha^{r_1})^{[3]} \bdot (\alpha^{r_2})^{[3]} \bdot \alpha^{r_3}\tau \,, \,\, (\alpha^{r_1})^{[3]} \bdot (\alpha^{r_3}\tau)^{[3]} \bdot \alpha^{r_2} \,, \,\, (\alpha^{r_1})^{[3]} \bdot (\alpha^{r_3}\tau)^{[3]} \bdot \alpha^{r_4}\tau \big\} \,,
\]
where $r_1, r_2, r_3, r_4 \in [0,3]$ such that $r_1$ is even, $r_2$ is odd, and $r_3 \not\equiv r_4 \pmod{2}$. In particular, $|\supp(S)| = 3$.

\end{enumerate}
\end{lemma}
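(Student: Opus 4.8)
The plan is to treat the two statements separately: part (1) follows the template of Lemma~\ref{3.1} (its dihedral counterpart), while part (2) is a finite analysis of the quaternion group of order $8$.

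\emph{Part (1).} Fix generators $\alpha,\tau$ with $\alpha^{2n}=1_G$, $\tau^{2}=\alpha^{n}$, $\tau\alpha=\alpha^{-1}\tau$, so that $H=\la\alpha\ra\cong C_{2n}$ and $H_1=\la\alpha^{2}\ra\cong C_n$. Arguing by contradiction, assume $|S_{G_0}|\ge 2$. Partition $H$ into $H_1$ and its coset $H_2$, and $G_0=\{\alpha^k\tau\}$ into $G_1=\{\alpha^{2k}\tau\}$ and $G_2=\{\alpha^{2k+1}\tau\}$; since $n$ is even, $\alpha^n\in H_1$, and the identity $\alpha^i\tau\cdot\alpha^j\tau=\alpha^{\,i-j+n}$ shows that the product of any two terms of $S$ lying in a common class among $H_1,H_2,G_1,G_2$ belongs to $H_1$. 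Pairing up terms within each class yields $T_1\in\mathcal F(H_1)$ with $|T_1|\ge 2n-2$; a product-one subsequence of $T_1$ of length $n$ would give one of $S$ of length $2n$, so $T_1$ has none. Since $\mathsf s(H_1)=\mathsf s(C_n)=2n-1$, we are forced into $|T_1|=2n-2$, exactly three of the four class sizes are odd, and Lemma~\ref{2.4} gives $T_1=g^{[n-1]}\bdot h^{[n-1]}$ with $\ord(gh^{-1})=n$. From here I would carry out the case analysis of Lemma~\ref{3.1} --- $n=4$; $n\ge 6$ with $|S_{G_1}|+|S_{G_2}|\ge 3$; $n\ge 6$ with $|S_{G_1}|+|S_{G_2}|=2$ --- to produce a product-one subsequence of $S$ of length $2n$. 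The only genuinely new ingredient compared with the dihedral case is the central element $\alpha^n$: one uses $(\alpha^k\tau)^{2}=\alpha^n$, $(\alpha^k\tau)^{-1}=\alpha^{k+n}\tau$, and the fact that $\alpha^i\tau\cdot\alpha^j\tau$ and $\alpha^j\tau\cdot\alpha^i\tau$ are mutually inverse, so that the ``splitting off'' manipulations of Lemma~\ref{3.1} go through after inserting appropriate pairs of terms equal to $g$ or $h$.

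\emph{Part (2).} Here $n=2$, so $\exp(G)=4$ and $|S|=7$. First, $\mathsf h(S)\le 3$: every element of $G$ has order dividing $4$, so an element of multiplicity $\ge 4$ would give a product-one subsequence of length $4$; hence $|\supp(S)|\ge 3$. Next comes the main constraint: if $T_1\bdot T_2\mid S$ with $|T_1|=|T_2|=2$ and there is $z\in\pi(T_1)\cap\pi(T_2)$ with $z^{2}=1_G$, then concatenating the two orderings realizing $z$ produces a product-one subsequence of $S$ of length $4$; so $S$ admits no two disjoint length-$2$ subsequences both realizing the same element of $\{1_G,\alpha^n\}$. (Alternatively one locates such a pair via $\psi\colon G\to\overline G\cong C_2\oplus C_2$: since $\mathsf s(C_2\oplus C_2)=5$ and $7-2\ge 5$, one peels off two disjoint zero-sum subsequences of length $2$ from $\psi(S)$, whose products in $\mathsf Z(G)$ must then be distinct.) Now the length-$2$ subsequences realizing $1_G$ are exactly $1_G^{[2]}$, $(\alpha^n)^{[2]}$, and $x\bdot x^{-1}$ with $\ord(x)=4$, and those realizing $\alpha^n$ are $1_G\bdot\alpha^n$ and $x^{[2]}$ with $\ord(x)=4$. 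Feeding this back, the ``no two disjoint'' condition together with $\mathsf h(S)\le 3$ forces $|\supp(S)|=3$ with multiplicity pattern $(3,3,1)$ and restricts the support to a short list of configurations; choosing $\alpha,\tau$ adapted to such a configuration puts $S$ into one of the three stated shapes. It then remains to check that any sequence on the short list which is \emph{not} of one of these forms contains a product-one subsequence of length $4$ --- this includes the ``non-decomposable'' ones such as $\alpha\bdot\alpha^n\bdot\tau\bdot\alpha\tau$ (product $1_G$, but admitting no splitting into two pairs with both pair-products central) --- which eliminates the spurious cases and also records $|\supp(S)|=3$.

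The step I expect to be most laborious is, in part (1), the case $n=4$ of the case analysis, where the four classes have size $4$ rather than $2$, so the small-group shortcuts of Lemma~\ref{3.1} are unavailable and explicit product-one subsequences of length $8$ must be written down by hand; and in part (2), the bookkeeping needed to be sure that every configuration compatible with $\mathsf h(S)\le 3$ and the central-pair condition has been accounted for. Both are elementary but lengthy.
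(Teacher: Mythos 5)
Your proposal is a plan rather than a proof: in both parts the decisive argument is deferred. For part (1) you set up the four-class decomposition of Lemma \ref{3.1} correctly (the pairing into $T_1 \in \mathcal F(H_1)$ with $H_1 = \la \alpha^{2}\ra \cong C_n$, the conclusions $|T_1| = 2n-2$ and $T_1 = g^{[n-1]}\bdot h^{[n-1]}$ are fine), but then write that you ``would carry out the case analysis of Lemma \ref{3.1}''. That case analysis is the entire content of the lemma, and it does not transplant mechanically: in the dicyclic group of order $4n$ the four classes $H_1,H_2,G_1,G_2$ have size $n$ rather than $n/2$, so the $n=4$ base case of Lemma \ref{3.1} (which rests on $|H_1|=|H_2|=|G_1|=|G_2|=2$) has no analogue, and the relation $\tau^{2}=\alpha^{n}$ changes every product computation (now $\alpha^{i}\tau\cdot\alpha^{j}\tau=\alpha^{i-j+n}$, so two equal terms of $G_0$ multiply to $\alpha^{n}$ rather than to $1_G$). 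The paper avoids all of this with an idea absent from your plan: pass to $\psi\colon G\to \overline G = G/\mathsf Z(G)$, a dihedral group of order $2n$, extract \emph{three} pairwise disjoint subsequences $T_1,T_2,T_3$ of length $n$ with $\pi(T_i)\cap\{1_G,\alpha^{n}\}\neq\emptyset$ (two via Theorem \ref{1.1}.1 and the third via Lemma \ref{3.1}, the last step using $|S_{G_0}|\ge 2$), and then pigeonhole two of them with equal central product to get a product-one subsequence of length $2n$. This reuses the dihedral results wholesale and needs no new case analysis.

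For part (2), the two constraints you isolate ($\mathsf h(S)\le 3$ and ``no two disjoint length-$2$ subsequences realizing the same element of $\mathsf Z(G)$'') do \emph{not} force the multiplicity pattern $(3,3,1)$: the sequence $1_G^{[3]}\bdot \alpha^{[2]}\bdot\tau\bdot\alpha\tau$ satisfies both (the only central pairs it contains are a single copy of $1_G^{[2]}$ and a single copy of $\alpha^{[2]}$, realizing different central elements) yet has support of size $4$; it is excluded only because $1_G\bdot\alpha\bdot\alpha\tau\bdot\tau$ is a product-one subsequence of length $4$ that your criterion does not detect, as it splits into two pairs with mutually inverse \emph{non-central} products $\alpha$ and $\alpha^{-1}$. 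So your ``short list'' is not actually pinned down by the stated constraints, and the residual elimination --- which is where the work lies --- is left as unspecified bookkeeping. The paper instead reruns the four-class pairing argument inside $Q_8$ (where each class genuinely has two elements), shows that one of the class sizes $r,s,t,\ell$ vanishes while the other three are odd, and reads off the three normal forms case by case; some exhaustive argument of this kind is needed to complete your part (2).
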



\begin{proof}
Let $\alpha, \tau \in G$ such that $G = \la \alpha, \tau \t \alpha^{2n} = 1_G, \tau^{2} = \alpha^{n}, \text{ and } \tau\alpha = \alpha^{-1}\tau \ra$ and let $H = \la \alpha \ra$. Let $S \in \mathcal F (G)$ be a sequence of length $|S| = 4n-1$ such that $S$ has no product-one subsequence of length $2n$. Note that $|S_{G_0}| \ge 1$.

\smallskip
1. Let $n \ge 4$. Assume to the contrary that $|S_{G_0}| \ge 2$. Then $\psi(S)$ is a sequence of length $4n-1$ over $\overline{G}$ and $|\psi(S)_{\overline{G}_0}| \ge 2$. Let $g_1, g_2 \in G_0$ be such that $g_1 \bdot g_2 \t S_{G_0}$. Since $|\psi(S \bdot (g_1 \bdot g_2)^{[-1]})| = 4n - 3 \ge 2n$, it follows by Theorem \ref{1.1}.1 that there exists $T_1 \t S\bdot (g_1\bdot g_2)^{[-1]}$ of length $n$ such that $\psi(T_1)$ is a product-one sequence.
Since $|\psi(S \bdot (g_1 \bdot g_2 \bdot T_1)^{[-1]})| = 3n - 3 \ge 2n$, it follows again by Theorem \ref{1.1}.1 that there exists $T_2 \t S \bdot (g_1 \bdot g_2 \bdot T_1)^{[-1]}$ of length $n$ such that $\psi(T_2)$ is a product-one sequence.
Therefore $|\psi(S \bdot (T_1 \bdot T_2)^{[-1]})| = 2n - 1$ and $|\psi(S \bdot (T_1 \bdot T_2)^{[-1]})_{\overline{G}_0}| \ge |\psi(g_1\bdot g_2)| = 2$. We deduce by Lemma \ref{3.1} that there exists $T_3 \t S \bdot (T_1 \bdot T_2)^{[-1]}$ of length $n$ such that $\psi(T_3)$ is a product-one sequence. Since $\pi(T_i) \cap \{ 1_G, \alpha^{n} \} \neq \emptyset$ for all $i \in [1,3]$, we obtain that there exist distinct $i, j \in [1,3]$ such that $T_i \bdot T_j$ is a product-one subsequence of $S$ of length $2n$, a contradiction.

\smallskip
2. Let $n = 2$. If $|S_{G_0}| = 1$, then $|S_H| = 6$, and hence Lemma \ref{2.4} ensures that $S$ has the desired structure. We assume that $|S_{G_0}| \ge 2$. 
Let $H_1 = \{ 1_G, \alpha^{2} \}$, $H_2 = \{ \alpha, \alpha^{3} \}$, $G_1 = \{ \tau, \alpha^{2}\tau \}$, and $G_2 = \{ \alpha\tau, \alpha^{3}\tau \}$. Suppose that $S_{H_1} = a_1 \bdot \ldots \bdot a_r$, $S_{H_2} = b_1 \bdot \ldots \bdot b_s$, $S_{G_1} = c_1 \bdot \ldots \bdot c_t$, and $S_{G_2} = d_1 \bdot \ldots \bdot d_{\ell}$, where $r, s, t, \ell \in \N_0$. Then $r + s + t + \ell =7$. We set
\[
  \begin{aligned}
     T_1 \,\, = & \,\,\, (a_1 a_2) \bdot \ldots \bdot (a_{2 \lfloor \frac{r}{2} \rfloor -1} a_{2 \lfloor \frac{r}{2} \rfloor}) \bdot (b_1 b_2) \bdot \ldots \bdot (b_{2 \lfloor \frac{s}{2} \rfloor - 1} b_{2 \lfloor \frac{s}{2} \rfloor}) \bdot \\
                & \,\,\, (c_1 c_2) \bdot \ldots \bdot (c_{2 \lfloor \frac{t}{2} \rfloor - 1} c_{2 \lfloor \frac{t}{2} \rfloor}) \bdot (d_1 d_2) \bdot \ldots \bdot (d_{2 \lfloor \frac{\ell}{2} \rfloor - 1} d_{2 \lfloor \frac{\ell}{2} \rfloor}) \,.
  \end{aligned}
\]
Then $T_1 \in \mathcal F (H_1)$ of length $|T_1| = \blfloor \frac{r}{2} \brfloor + \blfloor \frac{s}{2} \brfloor + \blfloor \frac{t}{2} \brfloor + \blfloor \frac{\ell}{2} \brfloor \ge 2$. Since $S$ has no product-one subsequence of length $4$, we obtain that $T_1$ has no product-one subsequence of length $2$. Since $|H_1| = 2$, we obtain that $|T_1| = 2$, and hence three elements of $\{ r, s, t, \ell \}$ are odd.

If $\{ r, s, t, \ell \}\subset \N$, then $a_1 b_1, \, c_1 d_1 \in H_2$. Since $|H_2| = 2$, we obtain that either $a_1 b_1 = c_1 d_1$ or $a_1 b_1 = (c_1 d_1)^{-1}$. It follows that either $c_1 \bdot b_1 \bdot a_1 \bdot d_1$ or $a_1 \bdot b_1 \bdot c_1 \bdot d_1$ is a product-one subsequence of $S$ of length $4$, a contradiction. Thus one element of $\{ r, s, t, \ell \}$ must be zero, and we distinguish four cases.

Suppose that $r = 0$. Then all $s, t, \ell$ must be odd with $s + t + \ell = 7$. Since $|H_1| = |H_2| = |G_1| = |G_2| = 2$, we obtain that there exist subsequences $W_1, W_2$ of $S$ such that $W_1 = x^{[2]}$ and $W_2 = y^{[2]}$ for some $x, y \in G$. If $x \in G_0$ or $y \in G_0$, then $W_1 \bdot W_2$ is a product-one subsequence of $S$ of length $4$, a contradiction. If $x, y \in H_2$, then $x = y$ or $x = y^{-1}$, which implies that $W_1 \bdot W_2$ is a product-one subsequence of $S$ of length $4$, a contradiction.

Suppose that $s = 0$. Then all $r, t, \ell$ must be odd with $r + t + \ell = 7$. Since $|H_1| = |H_2| = |G_1| = |G_2| = 2$, we obtain that there exist subsequences $W_1, W_2$ of $S$ such that $W_1 = x^{[2]}$ and $W_2 = y^{[2]}$ for some $x, y \in G$. If $x, y \in H_1$ or $x, y \in G_0$, then $W_1 \bdot W_2$ is a product-one subsequence of $S$ of length $4$, a contradiction. Thus we may assume that $x \in H_1$ and $y \in G_0$, whence $r = 3$ and $t + \ell = 4$. Thus either $S = \big( a^{[2]}_1 \bdot a_2 \big) \bdot \big( c^{[2]}_1 \bdot c_2 \big) \bdot d_1$ or $S = \big( a^{[2]}_1 \bdot a_2 \big) \bdot c_1 \bdot \big( d^{[2]}_1 \bdot d_2 \big)$. In the former case, if $a_1 \neq a_2$ (or $c_1 \neq c_2$ respectively), then $a_1 \bdot a_2 \bdot c^{[2]}_1$ (or $a^{[2]}_1 \bdot c_1 \bdot c_2$ respectively) is a product-one subsequence of $S$ of length $4$, a contradiction, whence $S = a^{[3]}_1 \bdot c^{[3]}_1 \bdot d_1$ has the desired structure. In the latter case, we similarly obtain $S = a^{[3]}_1 \bdot c_1 \bdot d^{[3]}_1$, which has the desired structure.

Suppose that $t = 0$. Then all $r, s, \ell$ must be odd with $\ell \ge 3$ and $r + s + \ell = 7$. Since $|H_1| = |H_2| = |G_1| = |G_2| = 2$, we obtain that there exist subsequences $W_1, W_2$ of $S$ such that $W_1 = x^{[2]}$ and $W_2 = y^{[2]}$, where $x \in G$ and $y \in G_2$. If $x \in G_2$, then $W_1 \bdot W_2$ is a product-one subsequence of $S$ of length $4$, a contradiction. Thus we must have $\ell = 3$ and $r + s = 4$. If $s = 3$, then $x\in H_2$, and hence $W_1 \bdot W_2$ is a product-one subsequence of $S$ of length $4$, a contradiction. Hence we must have $r = 3$ and $s = 1$, and thus after renumbering if necessary, we have $S = \big( a^{[2]}_1 \bdot a_2 \big) \bdot b_1 \bdot \big( d^{[2]}_1 \bdot d_2 \big)$. If $a_1 \neq a_2$ (or $d_1 \neq d_2$ respectively), then $a_1 \bdot a_2 \bdot d_1 \bdot d_1$ (or $a_1 \bdot a_1 \bdot d_1 \bdot d_2$ respectively) is a product-one subsequence of $S$ of length $4$, a contradiction. Therefore $S = a^{[3]}_1 \bdot b_1 \bdot d^{[3]}_1$ has the desired structure.

Suppose that $\ell = 0$. A similar argument as used in the case $t = 0$ shows that $S = a_1^{[3]} \bdot b_1 \bdot c_1^{[3]}$, which has the desired structure.
\end{proof}

\smallskip
\begin{proof}[Proof of Theorem \ref{1.1}.2]
If $n \ge 3$ is odd, then $\mathsf s (G) = \mathsf E (G)$ and the assertion follows from \cite{Ba07}. Suppose that $n \ge 2$ is even, and we assert that $\mathsf s (G) = 4n$. By Lemma \ref{2.2}, it suffices to show that every sequence of length $4n$ has a product-one subsequence of length $2n$. Let $S \in \mathcal F (G)$ be a sequence of length $4n$, and assume to the contrary that $S$ has no product-one subsequence of length $2n$.
Let $T \t S$ be a subsequence of length $4n-1$. Then $T$ has no product-one subsequence of length $2n$.

Suppose that $n \ge 4$. Then Lemma \ref{4.1}.1 ensures that $|T_{G_0}| = 1$, and in addition that $\bigl\vert ( S \bdot T^{[-1]}_{G_0})_{G_0} \bigl\vert \, = 1$, implying $|S_{G_0}| = 2$. Let $W \t S$ be a subsequence of length $4n-1$ with $S_{G_0} \t W$. Again by Lemma \ref{4.1}.1, we obtain that $W$, and hence $S$, has a product-one subsequence of length $2n$, a contradiction.

Suppose that $n = 2$. Then $T$ must have the desired structure in Lemma \ref{4.2}.2. We fix $\alpha, \tau \in G$ such that $G = \la \alpha, \tau \t \alpha^{4} = 1_G, \tau^{2} = \alpha^{2}, \text{ and } \tau \alpha = \alpha^{-1}\tau \ra$. If $|T_{G_0}| = 1$, then $S = x \bdot (\alpha^{r_1})^{[3]} \bdot (\alpha^{r_2})^{[3]} \bdot \alpha^{r_3}\tau$ for some $x \in G$ and $r_1, r_2, r_3 \in [0,3]$ with $\gcd(r_1 - r_2, 4) = 1$. Since $S \bdot (\alpha^{r_1})^{[-1]}$ is a sequence of length $7$ having no product-one subsequence of length $4$, Lemma \ref{4.1}.2 ensures that $x = \alpha^{r_1}$, whence $(\alpha^{r_1})^{[4]}$ is a product-one subsequence of $S$ of length $4$, a contradiction. If $|T_{G_0}| \ge 2$, then the same argument shows that $(\alpha^{r_1})^{[4]}$ is a product-one subsequence of $S$ of length $4$, a contradiction.
\end{proof}

\smallskip
\begin{lemma} \label{4.2}~
Let $n \in \N_{\ge 2}$ and let $S \in \mathcal F (G)$ be a sequence of length $|S| = 6n - 1$.
\begin{enumerate}
\item If $n \ge 3$ and $S$ has no product-one subsequence of length $4n$, then $|S_{G_0}| = 1$.

\smallskip
\item If $n = 2$ and $S$ has no product-one subsequence of length $8$, then for any $\alpha, \tau \in G$ with $G = \la \alpha, \tau \t \alpha^{4} = 1_G, \tau^{2} = \alpha^{2}, \text{ and } \tau\alpha = \alpha^{-1}\tau \ra$, we have that $S$ has one of the following forms{\rm \,:}
\[\hspace{35pt}
  S \, \in \,  \Big\{ \big( (\alpha^{r_1})^{[3]} \bdot (\alpha^{r_2})^{[3]} \bdot \alpha^{r_3}\tau \big) \bdot S_0, \, \big( (\alpha^{r_1})^{[3]} \bdot (\alpha^{r_3}\tau)^{[3]} \bdot \alpha^{r_2} \big) \bdot S_1, \, \big( (\alpha^{r_1})^{[3]} \bdot (\alpha^{r_3}\tau)^{[3]}  \bdot \alpha^{r_4}\tau \big) \bdot S_1 \Big\} \,,
\]
where $S_0 \in \big\{ (\alpha^{r_1})^{[4]}, \, (\alpha^{r_2})^{[4]} \big\}$, $S_1 \in \big\{ (\alpha^{r_1})^{[4]}, \, (\alpha^{r_3}\tau)^{[4]} \big\}$, and $r_1, r_2, r_3, r_4 \in [0,3]$ such that $r_1$ is even, $r_2$ is odd, and $r_3 \not\equiv r_4 \pmod{2}$.
\end{enumerate}
\end{lemma}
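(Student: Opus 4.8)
The plan is to split the proof into three regimes: \(n\geq 4\) even and \(n\) odd (which together cover part~1), and \(n=2\) (part~2). Fix \(\alpha,\tau\in G\) with \(G=\la\alpha,\tau\t\alpha^{2n}=1_G,\ \tau^{2}=\alpha^{n},\ \tau\alpha=\alpha^{-1}\tau\ra\), put \(H=\la\alpha\ra\), and keep the epimorphism \(\psi\colon G\to\overline G\) in mind. One first records that \(|S_{G_0}|\geq 1\): otherwise \(S\) lies over the cyclic group \(H\) of order \(2n\) and has length \(6n-1\geq\mathsf d(H)+2\cdot 2n\), so by iterating the Erd\H os--Ginzburg--Ziv theorem \(S\) contains two disjoint product-one subsequences of length \(2n\), hence a product-one subsequence of length \(4n\), contrary to hypothesis. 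For \(n\geq 4\) even the argument is short and parallels the proof of Lemma~\ref{3.2}.1: since \(\mathsf s(G)=4n\) by Theorem~\ref{1.1}.2, if \(|S_{G_0}|\geq 2\) one picks \(g_1,g_2\in G_0\) with \(g_1\bdot g_2\t S_{G_0}\), notes \(|S\bdot(g_1\bdot g_2)^{[-1]}|=6n-3\geq\mathsf s(G)\) so that it has a product-one subsequence \(T_1\) of length \(2n\); then \(S\bdot T_1^{[-1]}\) has length \(4n-1\) and still contains \(g_1\bdot g_2\), so Lemma~\ref{4.1}.1 yields a product-one subsequence \(T_2\) of \(S\bdot T_1^{[-1]}\) of length \(2n\), whence \(T_1\bdot T_2\) is a product-one subsequence of \(S\) of length \(4n\), a contradiction; so \(|S_{G_0}|=1\).

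For \(n\) odd we have \(\mathsf s(G)=\mathsf E(G)=6n\), the shortcut above is unavailable, and we must run the dicyclic analogue of the (long) proof of Lemma~\ref{3.2}.2. The key observation is that for \(n\) odd the commutator subgroup is \(G'=\la\alpha^{2}\ra\), of order \(n\), with \(\alpha^{n}\notin G'\), while \(\ker\psi=\la\alpha^{n}\ra\); hence whenever a subsequence \(T\) of \(S\) satisfies \(\pi(T)\cap\ker\psi\neq\emptyset\) together with \(\pi(T)\subseteq G'\), in fact \(1_G\in\pi(T)\). Since \(|\overline S|=6n-1\geq\mathsf d(\overline G)+2\cdot 2n=5n\), iterating Theorem~\ref{A} shows that \(\overline S\) has a product-one subsequence \(\overline T_1\bdot\overline T_2\) of length \(4n\) with each \(\overline T_i\) product-one of length \(2n\); its preimage in \(S\) is product-one in \(G\) precisely when \(\overline T_1\) and \(\overline T_2\) lie in the same coset of \(G'\), equivalently when a certain parity of exponent sums of \(\overline T_1\) and \(\overline T_2\) agrees. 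The plan is then to exploit the freedom in choosing \(\overline T_1,\overline T_2\) together with the parity-flipping replacement of a term \(x\) by its partner \(\alpha^{n}x\) (which moves a block into the other \(G'\)-coset): assuming \(|S_{G_0}|\geq 2\) and that no such adjustment produces a product-one subsequence of \(S\) of length \(4n\), one decomposes \(S_{G_0}=(a_1^{[2]}\bdot\ldots\bdot a_r^{[2]})\bdot(c_1\bdot\ldots\bdot c_u)\) with the \(c_i\) distinct, writes \(S_H\) in reduced form, and reproduces the case analysis of Lemma~\ref{3.2}.2 (cases \(r\geq 1\) and \(r=0\), with sub-cases \(u=2n\), \(u=2\), and \(3\leq u\leq 2n-1\)), feeding the paired-up \(S_{G_0}\)-terms into \(S_H\) and invoking Lemmas~\ref{2.3}, \ref{2.4} and~\ref{3.2}; each branch either produces the forbidden length-\(4n\) product-one subsequence or forces \(|S_{G_0}|=1\).

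For part~2, \(n=2\), so \(G\) is the quaternion group of order \(8\) and \(|S|=11\). Since \(\mathsf s(G)=8\leq 11\), \(S\) has a product-one subsequence \(W\) of length \(4\); then \(S\bdot W^{[-1]}\) has length \(7\) and no product-one subsequence of length \(4\) (such a subsequence together with \(W\) would give a product-one subsequence of \(S\) of length \(8\)), so by Lemma~\ref{4.1}.2 it equals one of the three listed forms. Writing \(S=(S\bdot W^{[-1]})\bdot W\), it remains the finite task of determining which product-one sequences \(W\) of length \(4\) over \(G\) leave \(S\) free of product-one subsequences of length \(8\): carrying out the extraction for the various length-\(4\) product-one blocks of \(S\) and comparing the resulting Lemma~\ref{4.1}.2 forms pins \(W\) down to \(S_0\), respectively \(S_1\), as in the statement. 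The converse (b)\(\Rightarrow\)(a) is a direct verification for each listed \(S\), using that the appended blocks \((\alpha^{r_1})^{[4]}\) and \((\alpha^{r_3}\tau)^{[4]}\) are themselves product-one, that deleting the appended block returns a Lemma~\ref{4.1}.2 sequence, and a short check that no selection of \(8\) of the \(11\) terms admits an ordering with product \(1_G\).

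I expect the main obstacle to be the \(n\) odd branch of part~1: threading the \(\la\alpha^{n}\ra\)-bookkeeping — deciding, for each extracted block, whether its lift lies in \(G'\) — through the Bass-style case analysis is delicate, and, exactly as in Lemma~\ref{3.2}.2, the sub-case in which \(S_{G_0}\) is square-free with \(3\leq|S_{G_0}|\leq 2n-1\) will need its own finer sub-analysis via the size of \(\Pi_2(c_1\bdot\ldots\bdot c_u)\) and Lemma~\ref{2.3}. A secondary difficulty is the enumeration in part~2 needed to exclude the ``wrong'' appended blocks, for instance \((\alpha^{r_3}\tau)^{[4]}\) attached to the first form.
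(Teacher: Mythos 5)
Your $n\ge 4$ even branch is correct and coincides with the paper's argument (extract $T_1$ of length $2n$ avoiding two $G_0$-terms via $\mathsf s(G)=4n$, then apply Lemma \ref{4.1}.1 to the remaining $4n-1$ terms). The $n=2$ branch has the right skeleton, but the ``finite task'' of pinning down the appended block $W$ is exactly where the work lies and is not carried out; note also that the paper gains leverage by choosing $T_1$ inside $S\bdot(g_1\bdot g_2)^{[-1]}$ so that the residual length-$7$ sequence is forced into the \emph{third} of the Lemma \ref{4.1}.2 forms, after which the possibilities for $T_1$ are narrowed by re-extracting around individual terms of $T_1$ — an unconstrained $W$ leaves many more cases open.

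The genuine gap is the $n$ odd branch of part 1, which is the heart of the lemma and which you only outline. Moreover, the outline as stated would not go through. You propose to decompose $S_{G_0}=(a_1^{[2]}\bdot\ldots\bdot a_r^{[2]})\bdot(c_1\bdot\ldots\bdot c_u)$ with the $c_i$ distinct, copying the dihedral Lemma \ref{3.2}.2; but in a dicyclic group every element of $G_0$ has order $4$ (for $g=\alpha^k\tau$ one has $g^2=\tau^2=\alpha^n\ne 1_G$), so the blocks $a_i^{[2]}$ are not product-one and the entire mechanism of that decomposition — freely inserting length-$2$ product-one blocks to pad a subsequence up to the target length — collapses. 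The paper instead decomposes $S_{G_0}$ into inverse pairs $a_i\bdot a_i^{-1}$ and product-one blocks $W_i$ of length $4$, with a remainder having no product-one subsequence of length $2$ or $4$, and splits on $|S_{G_0}|\le n$ versus $|S_{G_0}|\ge n+1$ rather than on your subcases $u=2n$, $u=2$, $3\le u\le 2n-1$. Your ``parity-flipping replacement of a term $x$ by its partner $\alpha^n x$'' is also unjustified: $\alpha^n x$ need not occur in $S$, so two quotient blocks $\overline T_1,\overline T_2$ whose lifts have products $1_G$ and $\alpha^n$ respectively cannot in general be repaired; the paper avoids this by extracting enough blocks (four of length $n$ from $S_H$, plus one of length $2n$ via Lemma \ref{3.2}) to pigeonhole on $\{1_G,\alpha^n\}$. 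As it stands, the odd case — which you yourself flag as the main obstacle — is neither proved nor provable along the route you sketch.
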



\begin{proof}
Let $\alpha, \tau \in G$ such that $G = \la \alpha, \tau \t \alpha^{2n} = 1_G, \tau^{2} = \alpha^{n}, \text{ and } \tau\alpha = \alpha^{-1}\tau \ra$ and let $H = \la \alpha \ra$. Let $S \in \mathcal F (G)$ be a sequence of length $6n-1$ such that $S$ has no product-one subsequence of length $4n$. Note that $|S_{G_0}| \ge 1$.

\smallskip
1. Let $n \ge 3$. Assume to the contrary that $|S_{G_0}| \ge 2$. We first suppose that $n \ge 4$ is even. Let $g_1, g_2 \in G_0$ be such that $g_1 \bdot g_2 \t S_{G_0}$. Then $|S \bdot (g_1 \bdot g_2)^{[-1]}| = 6n - 3 \ge 4n$, and by Theorem \ref{1.1}.2, there exists a product-one subsequence $T_1 \t S \bdot (g_1 \bdot g_2)^{[-1]}$ of length $2n$. Then $|S \bdot T^{[-1]}_1| = 4n - 1$ and $|(S \bdot T^{[-1]}_1)_{G_0}| \ge |g_1 \bdot g_2| \ge 2$. Thus Lemma \ref{4.1}.1 ensures that $S \bdot T_1^{[-1]}$ has a product-one subsequence $T_2$ of length $2n$, whence $T_1 \bdot T_2$ is a product-one subsequnce of $S$ of length $4n$, a contradiction.

Suppose now that $n \ge 3$ is odd. We distinguish two cases.

\smallskip
\noindent
{\bf CASE 1.} \, $|S_{G_0}| \le n$.
\smallskip

Then $|S_H| \ge 5n-1$, and by mapping under $\psi$, we obtain a sequence $\psi(S_H) \in \mathcal F (\overline{H})$ of length at least $5n-1$. Since $5n-1 > 2n - 1 = \mathsf s (\overline{H})$, we obtain that there exists $T_1 \t S_H$ of length $n$ such that $\psi(T_1)$ is a product-one sequence over $\overline{H}$, whence $\pi^{*} (T_1) \in \{ 1_G, \alpha^{n} \}$. Continuing this process, we obtain that there exist subsequences $T_2, T_3, T_4$ of $S_H$ such that $|T_2| = |T_3| = |T_4| = n$ and $\pi^{*}(T_i) \in \{ 1_G, \alpha^{n} \}$ for all $i \in [2,4]$. Since $S$ has no product-one subsequence of length $4n$, we may assume that $\pi^{*}(T_1) \neq \pi^*(T_2)=\pi^{*}(T_3) = \pi^*(T_4)$. Consider the sequence $\psi \big( S \bdot (T_1 \bdot T_2 \bdot T_3)^{[-1]} \big) \in \mathcal F (\overline{G})$ of length $3n-1$.

Suppose that $n \ge 5$. Since $|S_{G_0}| \ge 2$, it follows by Lemma \ref{3.2} that there exists an ordered subsequence $T_5 \t S \bdot (T_1 \bdot T_2 \bdot T_3)^{[-1]}$ of length $2n$ such that $\pi^{*} (T_5) \in \{ 1_G, \alpha^{n} \}$. It follows that either $T_1 \bdot T_3 \bdot T_5$ or $T_2 \bdot T_3 \bdot T_5$ is a product-one subsequence of $S$ of length $4n$, a contradiction.

Suppose that $n = 3$. Since $|S_{G_0}| \ge 2$, Lemma \ref{3.2} ensures that
\[
  S \, = \, T_1 \bdot T_2 \bdot T_3 \bdot \big( \alpha^{r_1} \bdot \alpha^{r_2} \bdot \alpha^{r_3} \bdot \alpha^{r_4} \bdot \alpha^{r_5} \big) \bdot  \Big( \alpha^{r_6}\tau \bdot \alpha^{r_7}\tau \bdot \alpha^{r_8}\tau \Big) \,,
\]
where $r_1, \ldots, r_6 \in \{ 0, 3 \}$, $r_7 \in \{ 1, 4 \}$, and $r_8 \in \{ 2, 5 \}$. Since there exist distinct $i, j \in [6,8]$ such that $r_i \equiv r_j \pmod{2}$, we obtain $\{ \alpha, \alpha^{5} \} = \big\{ \alpha^{r_i}\tau\alpha^{r_j}\tau, \, \alpha^{r_j}\tau\alpha^{r_i}\tau \big\} \subset \Pi_2 (\alpha^{r_6}\tau \bdot \alpha^{r_7}\tau \bdot \alpha^{r_8}\tau )$, and assert that $S \bdot (T_2 \bdot T_3)^{[-1]}$ has a product-one subsequenc of length $6$, leading a contradiction to the fact that $S$ has no product-one subsequence of length $12$.

If $\pi^{*}(T_1) \in \Pi_3 (\alpha^{r_1} \bdot \ldots \bdot \alpha^{r_5})$, then $T_1 \bdot \alpha^{r_1} \bdot \ldots \bdot \alpha^{r_5}$ has a product-one subsequence of length $6$.
Suppose that $\bigl\vert \Pi_3 (\alpha^{r_1} \bdot \ldots \bdot \alpha^{r_5}) \bigl\vert \, = 1$ and $\pi^{*}(T_1) \notin \Pi_3 (\alpha^{r_1} \bdot \ldots \bdot \alpha^{r_5})$. Then $r_1 = \ldots = r_5$ and $\pi^{*}(T_1) \neq \alpha^{r_1}$, whence $\pi^{*}(T_1 \bdot \alpha^{r_1}) = \alpha^{3}$.
If $\{ 1_G, \alpha^{3} \} \subset \Pi_2 (T_1 \bdot \alpha^{r_1})$, then together with $\alpha^{r_2}\bdot\ldots\bdot \alpha^{r_5}$, we obtain a product-one subsequence of length $6$.  If $\{\alpha, \alpha^2\}\subset \Pi_2(T_1\bdot \alpha^{r_1})$ or $\{\alpha^4, \alpha^5\}\subset \Pi_2(T_1\bdot \alpha^{r_1})$, then $(T_1\bdot \alpha^{r_1}) \bdot \alpha^{r_6}\tau\bdot \alpha^{r_7}\tau \bdot \alpha^{r_8}\tau$ has a product-one subsequence of length $4$. Together with $\alpha^{r_2} \bdot \alpha^{r_3}$, we obtain a product-one subsequence of length $6$.

\smallskip
\noindent
{\bf CASE 2.} \, $|S_{G_0}| \ge n+1$.
\smallskip

By renumbering if necessary, we can assume that
\[
  S_{G_0} \, = \, (a_1\bdot a_1^{-1}) \bdot \ldots \bdot (a_r\bdot a_r^{-1}) \bdot \big( W_1\bdot \ldots W_x \big) \bdot  (c_1 \bdot \ldots \bdot c_u) \,,
\]
where $W_i$ is a product-one sequence of length $4$ for all $i \in [1,x]$, and $c_1 \bdot \ldots \bdot c_u$  has no product-one subsequence of length $2$ and $4$. Then $\mathsf h (c_1 \bdot \ldots \bdot c_u) \le 3$, and thus $u-2 \le |\supp(c_1 \bdot \ldots \bdot c_u)| \le n$, whence $u \le n+2$. Similarly we set
\[
  S_H \, = \, (e_1 \bdot e_1^{-1}) \bdot \ldots \bdot (e_s \bdot e^{-1}_s) \bdot \big( b^{[2]}_1 \bdot \ldots \bdot b^{[2]}_t \big) \bdot  (d_1 \bdot \ldots \bdot d_v) \,,
\]
where $d_i \notin \{ d_j, d_j^{-1} \}$ for all distinct $i, j \in [1,v]$, and $\ord(b_k) \neq 2$ for all $k \in [1,t]$. Then $d_1 \bdot \ldots \bdot d_v$ is square-free, and hence $v \le n+1$. Let $R = c_1 \bdot \ldots \bdot c_u \bdot d_1 \bdot \ldots \bdot d_v$.

\smallskip
\noindent
{\bf SUBCASE 2.1.} \, $r + s + t = 0$.
\smallskip

If $\mathsf h (R_{G_0}) = 1$, then $u \le n$, and hence $4x \ge (6n - 1) - u - v \ge 4n - 2$. It follows that $x \ge n$, whence $W_1 \bdot \ldots W_n$ is a product-one subsequence of $S$ of length $4n$, a contradiction. Thus we must have $\mathsf h (R_{G_0}) \ge 2$, and by renumbering if necessary, we may assume that $c_1 = c_2$.
If $v \ge \frac{n+1}{2} + 1$, then by mapping under $\psi$, we infer that there exist $i, j \in [1,v]$ such that $\pi^{*}(d_i d_j) = \alpha^{n}$, whence $c^{[2]}_1 \bdot d_i \bdot d_j$ is a product-one sequence of length $4$. It follows by $4x \ge 4n - 4$ that $W_1 \bdot \ldots \bdot W_{n-1} \bdot c^{[2]}_1 \bdot d_i \bdot d_j$ is a product-one subsequence of $S$ of length $4n$, a contradiction. Thus we must have $v \le \frac{n+1}{2}$.

Suppose that $n \ge 5$. Then $u + v \le \frac{3n+5}{2} \le 2n$, and since $u + v$ is odd, it follows that $u + v \le 2n - 1$. Hence $4x \ge 4n$, and therefore $W_1 \bdot \ldots \bdot W_n$ is a product-one subsequence of $S$ of length $4n$, a contradiction.

Suppose that $n = 3$. Then $u \le 5$ and $v \le 2$. If $u + v \le 5$, then $4x \ge 12$, whence $S$ has a product-one subsequence of length $12$, a contradiction. Thus we must have that $u = 5$ and $v = 2$, and then $4x = (6n - 1) - u - v = 10$, a contradiction.

\smallskip
\noindent
{\bf SUBCASE 2.2.} \, $r + s + t \ge 1$.
\smallskip

Suppose that $u \le n$. If $2r + 4x + 2s + 2t \ge 4n$, then $S$ must have a product-one subsequence of length $4n$. We assume that $2r + 4x + 2s + 2t \le 4n - 2$. Then $2n + 1 \ge u + v \ge (6n - 1) - (4n - 2) = 2n + 1$, and hence $u = n$ and $v = n+1$. Then we may assume by renumbering if necessary that $d_1 = 1_G$ and $d_2 = \alpha^{n}$. If $n \ge 5$, then there exist distinct $i, j \in [3, v]$, say $i = 3$ and $j = 4$, such that $d_3 d_4 \notin \{ 1_G, \alpha^{n} \}$. Since $|R_{G_0} \bdot (d_3 d_4)| = n + 1$, it follows by mapping under $\psi$ that there exists an ordered subsequence $T$ of $R_{G_0} \bdot d_3 \bdot d_4$ having even length such that $\pi^{*}(T) \in \{ 1_G, \alpha^{n} \}$. Since $R_{G_0}$ has no product-one subsequence of length $2$, we obtain that either $T$ or $T \bdot d_1 \bdot d_2$ is a product-one subsequence of $S$ having even length at least $4$. Since $2r + 4x + 2s + 2t = 4n - 2$ and $r + s + t \ge 1$, it follows that $S$ has a product-one subsequence of length $4n$, a contradiction.
If $n = 3$, then we obtain that $\{ \alpha, \alpha^{5} \} \subset \Pi_2 (R_{G_0})$ as argued in the case $n =3$ of {\bf CASE 1}. By renumbering if necessary, we may assume that $d_3 \in \{ \alpha, \alpha^{5} \}$ and $d_4 \in \{ \alpha^{2}, \alpha^{4} \}$. It follows that $\{ \alpha, \alpha^{5} \} \subset \Pi_2 (R_H)$, whence $R$ has a product-one subseqeunce $T$ of length $4$ with $|T_{G_0}| = 2$. Since $2r + 2x + 2s + 2t = 10$ and $r + s + t \ge 1$, it follows that $S$ has a product-one subseqeunce of legnth $12$, a contradiction.

Suppose that $u \ge n + 1$. Then $\mathsf h (R_{G_0}) \ge 2$, and by renumbering if necessary, we may assume that $c_1 = c_2$. If $t = 0$ and $v \le 1$, then $2r + 2s + 4x \ge 5n - 3 \ge 4n - 1$, and hence $S$ must have a product-one subsequence of length $4n$, a contradiction. Thus either $t \ge 1$ or $v \ge 2$. By mapping under $\psi$, \cite[Theorem 1.3]{Br-Ri18a} ensures that there exists an ordered sequence $T$ of $c_3 \bdot \ldots \bdot c_u \bdot d_1 \bdot d_2$ (or $c_3 \bdot \ldots \bdot c_u \bdot b^{[2]}_1$ if $v \le 1$) having even length such that $\pi^{*}(T) \in \{ 1_G, \alpha^{n} \}$. Since $R_{G_0}$ has no product-one subsequence of length $2$, we obtain that either $T$ or $T \bdot c^{[2]}_1$ is a product-one subsequence of $S$ having even length at least $4$. Since $2r + 4x + 2s + 2t \ge 4n - 4$ (or $2r + 4x + 2s + 2(t-1) \ge 5n - 6 \ge 4n - 4$ if $v \le 1$) and $r + s + t \ge 1$, it follows that $S$ has a product-one subsequence of length $4n$, a contradiction.

\smallskip
2. Let $n = 2$. If $|S_{G_0}| = 1$, then $|S_H| = 10$, and hence Lemma \ref{2.5} ensures that $S$ has the desired structure. We assume that $|S_{G_0}| \ge 2$. 
Let $g_1, g_2 \in G_0$ be such that $g_1 \bdot g_2 \t S_{G_0}$. Then $|S \bdot (g_1 \bdot g_2)^{[-1]}| = 9$, and by Theorem \ref{1.1}.2, there exists a product-one subsequence $T_1 \t S \bdot (g_1 \bdot g_2)^{[-1]}$ of length $4$. Then $|S \bdot T^{[-1]}_1| = 7$ and $|(S \bdot T^{[-1]}_1)_{G_0}| \ge |g_1 \bdot g_2| \ge 2$. Then Lemma \ref{4.1}.2 ensures that
\[
  S \bdot T^{[-1]}_1 \, = \, (\alpha^{r_1})^{[3]} \bdot (\alpha^{r_2}\tau)^{[3]} \bdot x \,,
\]
where $r_1, r_2 \in [0,3]$ such that $r_1$ is even and $x \in \{ \alpha, \alpha^{3}, \alpha^{r_3}\tau \}$ with $r_3 \not\equiv r_2 \pmod{2}$.
If $\alpha^{r_1} \in \supp(T_1)$, then $(\alpha^{r_1})^{[4]} \t S$, and we obtain by Lemma \ref{4.1}.2 that $S \bdot \big( (\alpha^{r_1})^{[4]} \big)^{[-1]} = x \bdot (\alpha^{r'})^{[3]} \bdot (\alpha^{r_2}\tau)^{[3]}$, where $r' \in [0,3]$ is even. Thus $T_1 = \alpha^{r_1} \bdot (\alpha^{r'})^{[3]}$ is a product-one sequence, which implies that $r' = r_1$. Therefore $S = x \bdot (\alpha^{r_1})^{[7]} \bdot (\alpha^{r_2}\tau)^{[3]}$ has the desired structure.
Suppose that $\alpha^{r_1} \notin \supp(T_1)$. Let $g \in \supp(T_1)$ be any element. Then $S \bdot (\alpha^{r_2}\tau \bdot x \bdot g)^{[-1]}$ has length $8$, and by Theorem \ref{1.1}.2, it has a product-one subsequence $T_2$ of length $4$. Then $S \bdot T^{[-1]}_2$ has no product-one subsequence of length $4$, and Lemma \ref{4.1}.2 ensures that $|\supp(S \bdot T^{[-1]}_2)| = 3$. Since $T_2 \neq (\alpha^{r_1})^{[4]}$, we have $\alpha^{r_1} \bdot \alpha^{r_2}\tau \bdot x \bdot g \t S \bdot T^{[-1]}_2$, whence $g \in \{ x, \alpha^{r_2}\tau \}$. Thus $T_1 \in \big\{ x^{[4]}, \, (\alpha^{r_2}\tau)^{[4]}, \, (x \bdot \alpha^{r_2}\tau)^{[2]} \big\}$.
If $T_1 = x^{[4]}$ (or $T_1 = (x \bdot \alpha^{r_2}\tau)^{[2]}$ respectively), then $(\alpha^{r_1})^{[2]} \bdot (x \bdot \alpha^{r_2}\tau)^{[2]} \bdot x^{[2]}$ (or $(\alpha^{r_1})^{[2]} \bdot (x \bdot \alpha^{r_2}\tau)^{[2]} \bdot (\alpha^{r_2}\tau)^{[2]}$ respectively) is a product-one subsequence of $S$ of length $8$, a contradiction.
Therefore $T_1 = (\alpha^{r_2}\tau)^{[4]}$, and $S$ has the desired structure.
\end{proof}

\smallskip
\begin{proof}[Proof of Theorem \ref{1.3}]
1. (b) $\Rightarrow$ (a) (1) Suppose that $\alpha, \tau \in G$ are generators, $H = \la \alpha \ra$, and $S = (\alpha^{r_1})^{[2n-1]} \bdot (\alpha^{r_2})^{[2n-1]} \bdot \alpha^{r_3}\tau$, where $r_1, r_2, r_3 \in [0, 2n-1]$ with $\gcd(r_1 - r_2, 2n) = 1$. It follows that $S_H = g^{[2n-1]} \bdot h^{[2n-1]}$ with $g, h \in H$ and $\ord(gh^{-1}) = 2n$. Then $S_H$ has no product-one subsequence of length $2n$, and thus the assertion follows.

\smallskip
(2) By (1), it suffices to verify that a sequence $S$ with $|S_{G_0}| \ge 2$ given in the statement has no product-one subsequence of length $4$. Suppose that $S = (\alpha^{t_1})^{[3]} \bdot (\alpha^{t_3}\tau)^{[3]} \bdot \alpha^{t_2}$, where $t_1, t_2, t_3 \in [0, 3]$ such that $t_1$ is even and $t_2$ is odd. Assume to the contrary that $S$ has a product-one subsequence $T$ of length $4$. Since $(\alpha^{t_1})^{[3]} \bdot \alpha^{t_2}$ is not product-one sequence, we must have $|T_{G_0}| = 2$, and thus either $T = (\alpha^{t_1})^{[2]} \bdot (\alpha^{t_3}\tau)^{[2]}$ or $T = \alpha^{t_1} \bdot \alpha^{t_2} \bdot (\alpha^{t_3}\tau)^{[2]}$. But they are nor product-one sequences. By the similar argument, we obtain that the remaining sequence has no product-one subsequence of length $4$.

\smallskip
(a) $\Rightarrow$ (b) Let $n \ge 2$ be even. Then $\mathsf s (G) = 4n$ by Theorem \ref{1.1}.2. Thus the assertion follows by Lemmas \ref{4.1} and \ref{2.4}.

\medskip
2. (b) $\Rightarrow$ (a) (1) The assertion follows from Lemma \ref{2.5}.

\smallskip
(2) By (1), it suffices to verify that a sequence $S$ with $|S_{G_0}| \ge 2$ given in the statement has no product-one subsequence of length $8$. Suppose that $S = (\alpha^{t_1})^{[3]} \bdot (\alpha^{t_3}\tau)^{[7]} \bdot \alpha^{t_2}$, where $t_1, t_2, t_3 \in [0,3]$ such that $t_1$ is even and $t_2$ is odd. Assume to the contrary that $S$ has a product-one subsequence $T$ of length $8$. Then we have either $|T_{G_0}| = 4$ or $|T_{G_0}| = 6$. If $|T_{G_0}| = 4$, then $T = (\alpha^{t_1})^{[3]} \bdot (\alpha^{t_3}\tau)^{[4]} \bdot \alpha^{t_2}$, but it is not product-one sequence. Thus $|T_{G_0}| = 6$, and hence we have either $T = (\alpha^{t_1})^{[2]} \bdot (\alpha^{t_3}\tau)^{[6]}$ or $T = \alpha^{t_1} \bdot \alpha^{t_2} \bdot (\alpha^{t_3}\tau)^{[6]}$. But they are not product-one sequences. By the similar argument, we obtain thet the remaining sequences have no product-one subsequence of length $8$.

\smallskip
(a) $\Rightarrow$ (b) Let $n \ge 2$. Note that $\mathsf E (G) = 6n$. Then the assertion follows by Lemmas \ref{4.2} and \ref{2.5}.
\end{proof}


\providecommand{\bysame}{\leavevmode\hbox to3em{\hrulefill}\thinspace}
\providecommand{\MR}{\relax\ifhmode\unskip\space\fi MR }
\providecommand{\MRhref}[2]{%
  \href{http://www.ams.org/mathscinet-getitem?mr=#1}{#2}
}
\providecommand{\href}[2]{#2}

\end{document}